\let\oldsqrt\sqrt
\def\sqrt{\mathpalette\DHLhksqrt}
\def\DHLhksqrt#1#2{%
\setbox0=\hbox{$#1\oldsqrt{#2\,}$}\dimen0=\ht0
\advance\dimen0-0.2\ht0
\setbox2=\hbox{\vrule height\ht0 depth -\dimen0}%
{\box0\lower0.4pt\box2}}
\newcommand{\R}{\mathbb{R}} 
\newcommand{\N}{\mathbb{N}} 
\newcommand{\e}{\varepsilon}
\newcommand{\diam}{\textnormal{diam}} 
\newcommand{\cA}{{\mathcal A}}
\newcommand{\cC}{{\mathcal C}}
\newcommand{\cL}{{\mathcal L}}
\newcommand{\cN}{{\mathcal N}}
\newcommand{\cQ}{{\mathcal Q}}
\newcommand{\cX}{{\mathcal X}}
\newcommand{\pOm}{ \partial \Omega}
\theoremstyle{definition}
\newtheorem{defi}{Definition}[section]
\newtheorem{remark}[defi]{Remark}
\theoremstyle{plain} 
\newtheorem{thm}[defi]{Theorem}
\newtheorem{lemma}[defi]{Lemma}
\theoremstyle{definition}
\numberwithin{equation}{section}
 \title{Existence and regularity results for a Neumann problem with mixed local and nonlocal diffusion}
\author{Craig Cowan$^a$\footnote{Email: \href{mailto:craig.cowan@umanitoba.ca}{craig.cowan@umanitoba.ca}}, ~~Mohammad El Smaily $^b$\footnote{Corresponding author. Email: \href{mailto:mohammad.elsmaily@unbc.ca}{mohammad.elsmaily@unbc.ca} } ~~and~~ Pierre Aime Feulefack $^b$\footnote{Email: \href{mailto:pierre.feulefack@aims-cameroon.org}{pierre.feulefack@aims-cameroon.org}}
\\
\footnotesize{$^a$ Department of Mathematics, University of Manitoba,}\\
\footnotesize{Winnipeg, MB, Canada}
\\
\footnotesize{$^b$ Department of Mathematics, University of Northern British Columbia,} \\
\footnotesize{ Prince George, BC, Canada}
}
\date{}
\begin{document}
\pdfrender{StrokeColor=black,TextRenderingMode=2,LineWidth=0.2pt}
\maketitle

\begin{abstract}
In this paper, we consider   an elliptic problem driven by a mixed local-nonlocal operator with drift  and subject to nonlocal Neumann condition. We prove  the existence and uniqueness of a solution $u\in  W^{2,p}(\Omega)$ of the considered problem with $L^p$-source function when $p$ and $s$ are in a certain range.

\end{abstract}

{\footnotesize
\textit{Keywords.}  Maximum principle, operators of mixed order with Drift, 

\vspace{.4cm}
\textit{2010 Mathematics Subject Classification}.  35A09, 35B50, 35B65, 35R11, 35J67, 47A75.  
}


\section{Introduction and main results}

In this paper, we are interested in the study of the following problem \begin{equation}\label{Eq1}
	\begin{split}
	\quad\left\{\begin{aligned}
		 Lu:=-\Delta u (x) +(-\Delta)^{s}u(x)-q(x)\cdot\nabla u(x) +a(x)u&= f(x) && \text{ in \quad $\Omega$}\\
		\frac{\partial u}{\partial \nu} &=0&& \text{ on }\quad \partial\Omega\\
		\cN_s u &=0    && \text{ on }\quad  \mathbb{R}^N\setminus \overline{\Omega},
	\end{aligned}\right.
	\end{split}
	\end{equation}
 with a mixed diffusion and a new type of Neumann boundary conditions. The new boundary condition is $\cN_s u=0$ on $\R^N\setminus \overline \Omega,$ where $\cN_s u$ --- known as the  nonlocal normal derivative of $u$ is given by
\begin{equation}\label{Normalderivative}
\cN_s u(x)=C_{N,s}\int_{\Omega} \frac{u(x)-u(y)}{|x-y|^{N+2s}}\ dy\qquad x\in \R^N\setminus\overline\Omega.
\end{equation}The diffusion term is a superposition of the classical Laplacian (local diffusion) and the fractional Laplacian $(-\Delta)^s$ for certain values of $s\in (0,1)$ that will be specified later. It is well known that the fractional Laplacian represents a  nonlocal diffusion in the medium.

 We recall that the operator $(-\Delta)^s$, with $s\in (0,1),$ stands for the fractional Laplacian and it is defined for compactly supported function $u:\R^N\to \R$ of class $\cC^2$ by
\begin{equation}
\begin{split}
 (-\Delta)^s u(x)&=C_{N,s}\lim_{\varepsilon\to 0^+}\int_{\R^N\setminus B_{\varepsilon}(x)}\frac{u(x)-u(y)}{|x-y|^{N+2s}}\ dy 
 \end{split}
\end{equation}
 with  the same  normalization constant  $C_{N,s}$ as in \eqref{Normalderivative}  given by

\begin{equation}\label{const}
C_{N,s}:= \pi^{-\frac{N}{2}}2^{2s}s\frac{\Gamma(\frac{N}{2}+s)}{\Gamma(1-s)}. ~~~
\end{equation}
We refer the reader to the results of Silvestre \cite{S07} about the spaces that $(-\Delta)^su$ belongs to according to the regularity of the function $u.$

The boundary conditions in \eqref{Eq1} consist of the classical Neumann boundary condition $\frac{\partial u}{\partial \nu}=0$ on $\partial \Omega$ ($\nu$ is the outward unit normal on $\partial\Omega$) and the nonlocal boundary condition $\cN_su=0$ (see \cite{DRV17}) on $\R^N\setminus \Omega.$ The classical Neumann condition states that there is no flux through the boundary of the domain. On the other hand, the nonlocal boundary condition $\cN_su=0$ states that if a particle is in $\R^N\setminus \overline \Omega,$ it may come back to any point $y\in \Omega$ with the probability density of
jumping from $x$ to $y$ being proportional to $|x-y|^{-N-2s}.$ A detailed description of \eqref{Eq1} is given in \cite{DV21ecology}. The condition $\cN_su=0$ is interpreted in  \cite{DV21ecology} as a condition that arises from the superposition of Brownian and L\'evy processes.

The PDE \begin{equation}\label{PDE}
-\Delta u(x) +(-\Delta)^{s}u-q(x)\cdot\nabla u(x) +a(x)u= f(x)~~\text{in}~~\Omega
\end{equation}
has been extensively studied when $q\equiv 0$ and the boundary condition is of Dirichlet type. Tat is, $u\equiv 0$ in $\R^N\setminus \Omega.$ Existence and regularity of solutions for \eqref{PDE}, as well as  maximum principles are among the results obtained in \cite{BDVV22}, \cite{BVDV21}, \cite{BDVV23}, and \cite{SVWZ22}, where the advection $q$ is absent and the boundary condition is of Dirichlet type. The authors of this paper studied \eqref{PDE} in the recent work \cite{CEF23}, where   an advection term is present and \eqref{PDE} is coupled with the Dirichlet condition $u\equiv 0$ on $\R\setminus \overline\Omega.$ 

The recent work \cite{SPV22} considers \eqref{PDE} with $q\equiv0$ and $a\equiv 0$ to provide spectral properties of the mixed diffusion operator. The work \cite{DRV17} considers a purely nonlocal diffusion and provides existence results for the problem with nonlocal Neumann conditions. It is important to note that \cite{DRV17} does not consider a PDE with a mixed diffusion and  it does not account for advection.

 \paragraph{The domain and the coefficients.} Throughout this paper, we assume that the  domain  $\Omega$ is an open bounded connected subset of $\R^N$ with smooth  boundary $\partial\Omega.$ The coefficients $q$ and $a$ are assumed to be smooth with $a \ge 0$ and not identically zero.

\paragraph{The normal derivative of $u$ on $ \pOm$.}  Our solutions will, in general, be $ C^1(\overline{\Omega})$ but the extension ($\widetilde u$ defined later) will not be sufficiently smooth.   Hence to compute  $ \partial_\nu u(x)$ on $ \pOm,$ we are using 
\[ \partial_\nu u(x) = \lim_{t \rightarrow 0^-}   \frac{ u(x_0 +t \nu(x_0) ) - u(x_0)}{t},\] where $\nu(x)$ is the unit outward normal to $\pOm$ at $x \in \pOm$.


\medskip 
We prove the following results for problem \eqref{Eq1}.

\begin{thm}    Let $\Omega$ be an open bounded set of $\R^N$ with smooth boundary and $f\in L^p(\Omega).$  Then, 

\begin{enumerate} 

\item  if  $ ~\frac{N-1}{2N}<s<\frac{1}{2}$ and $ N<p<\frac{1}{1-2s}$, problem \eqref{Eq1} admits a unique solution $u\in  W^{2,p}(\Omega).$

\item  if $p>N$ and $ \frac{1}{2}\le s< \frac{1}{2}+\frac{1}{2p},$ problem \eqref{Eq1} admits a unique solution $u\in  W^{2,p}(\Omega)$.  

\end{enumerate} 
    \end{thm}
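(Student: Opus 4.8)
The plan is to reduce the mixed-order Neumann problem to a fixed point problem for which classical elliptic $L^p$-theory applies to the local part, while the nonlocal part and the drift are treated perturbatively. First I would make sense of the problem on the correct function space: following \cite{DRV17} one extends $u$ to all of $\R^N$, and the nonlocal Neumann condition $\cN_s u = 0$ on $\R^N \setminus \overline\Omega$ is precisely what makes the bilinear form associated with $(-\Delta)^s$ act only through interactions within $\Omega$ and between $\Omega$ and its complement, so that the natural energy space is $H^1(\Omega)$ together with the Gagliardo seminorm restricted to pairs not both outside $\Omega$. The key structural observation is that once $u \in W^{2,p}(\Omega)$ with $p > N$ is known, $u \in C^{1,\alpha}(\overline\Omega)$, and its $\cN_s$-harmonic extension $\widetilde u$ to $\R^N \setminus \overline\Omega$ is as regular as the kernel allows; then $(-\Delta)^s \widetilde u$ restricted to $\Omega$ lies in $L^p(\Omega)$ — this is where the two ranges of $(s,p)$ enter, since one needs the singular integral $(-\Delta)^s$ applied to a $C^{1,\alpha}$ (resp. merely bounded near $\pOm$) function to land back in $L^p$, and the constraints $p < \frac{1}{1-2s}$ (for $s < 1/2$) and $s < \frac12 + \frac{1}{2p}$ (for $s \ge 1/2$) are exactly the thresholds guaranteeing $2s - N/p$-type Hölder gains do not overshoot integrability near the boundary.

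Next I would set up the iteration. Given $w \in W^{2,p}(\Omega) \hookrightarrow C^{1,\alpha}(\overline\Omega)$, form its extension $\widetilde w$ via the Neumann condition, define $g := f + q \cdot \nabla w - (-\Delta)^s \widetilde w \in L^p(\Omega)$, and solve the purely local Neumann problem
\begin{equation*}
-\Delta u + a(x) u = g \text{ in } \Omega, \qquad \partial_\nu u = 0 \text{ on } \pOm.
\end{equation*}
Since $a \ge 0$ and $a \not\equiv 0$, this problem is coercive on $H^1(\Omega)$ and, by Agmon–Douglis–Nirenberg / Calderón–Zygmund estimates up to the boundary for the Neumann problem, has a unique solution $u = Tw \in W^{2,p}(\Omega)$ with $\|u\|_{W^{2,p}} \le C \|g\|_{L^p}$. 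The map $T$ is then shown to be a contraction (or to have a fixed point via Schauder, after establishing an a priori bound) on a suitable ball: the drift term contributes $\|q\|_\infty \|\nabla w\|_{L^p}$, which is controlled by $\|w\|_{W^{1,p}}$, a norm lower-order than $W^{2,p}$, hence compact; the nonlocal term contributes $\|(-\Delta)^s \widetilde w\|_{L^p}$, which by the regularity bookkeeping above is bounded by $\|w\|_{C^{1,\alpha}} \lesssim \|w\|_{W^{2,p}}$ but — crucially — with a small constant or via a compactness/interpolation argument, since $(-\Delta)^s$ with $s < 1$ is of lower order than $-\Delta$. Combining these, a fixed point $u = Tu$ exists and solves \eqref{Eq1}.

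For \textbf{uniqueness}, I would prove a maximum principle for the full operator $L$ under the mixed Neumann conditions: if $Lu = 0$ in $\Omega$, $\partial_\nu u = 0$ on $\pOm$, $\cN_s u = 0$ outside, then testing the equation against $u$ and using that $-\Delta$ and $(-\Delta)^s$ both produce nonnegative Dirichlet-type energies under these boundary conditions, that the drift term integrates (after integration by parts) to something controlled, and that $\int_\Omega a u^2 > 0$ unless $u \equiv 0$ on the set where $a > 0$, forces $u \equiv 0$; connectedness of $\Omega$ and unique continuation / the strong maximum principle then propagate this to all of $\Omega$. The handling of the drift term in the energy identity is delicate because $q \cdot \nabla u$ is not in divergence form, so I would either assume/exploit smoothness of $q$ to write $\int q \cdot \nabla u \, u = -\frac12 \int (\div q) u^2$ and absorb it, or invoke the $L^p$ a priori estimate directly on the difference of two solutions.

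\textbf{Main obstacle.} The hard part will be the regularity statement that $(-\Delta)^s \widetilde w \in L^p(\Omega)$ with a quantitative bound, uniformly as $w$ ranges over a bounded set of $W^{2,p}(\Omega)$, and with a constant that can be made small (or the operator shown compact relative to $-\Delta$) so that the fixed point scheme closes. This requires a careful analysis near $\pOm$ of the $\cN_s$-harmonic extension $\widetilde w$ — it is only as smooth as a solution of a nonlocal equation with $C^{1,\alpha}$ Dirichlet-type data across $\pOm$ allows — and it is precisely the interplay between this boundary regularity and the integrability exponent $p$ that dictates the two parameter ranges $\frac{N-1}{2N} < s < \frac12$, $N < p < \frac{1}{1-2s}$ and $p > N$, $\frac12 \le s < \frac12 + \frac{1}{2p}$ in the statement. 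I expect the proof to isolate this as a standalone lemma (estimating $\|(-\Delta)^s \widetilde w\|_{L^p(\Omega)}$) and then feed it into an otherwise standard contraction-mapping argument.
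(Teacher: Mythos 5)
You have correctly isolated the paper's key lemma (a quantitative $L^p$ bound on $(-\Delta)^s\widetilde w$ for $w$ in a bounded set of $W^{2,p}(\Omega)$, with the two parameter ranges coming from the boundary behaviour of the extension), but the way you propose to close the existence argument has a real gap. There is no small parameter: the bound is $\|(-\Delta)^s\widetilde w\|_{L^p(\Omega)}\le C\|w\|_{W^{2,p}(\Omega)}$ with a constant of order one, so the map $T$ you define is not a contraction, and ``lower order than $-\Delta$'' does not rescue this because the extension $\widetilde w$ is only $W^{1,q}_{loc}$ across $\pOm$ (its gradient blows up like $\delta(x)^{2s-1}$), so no interpolation with a small constant is available. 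The Schauder alternative you mention requires exactly the a priori bound $\|u\|_{W^{2,p}}\le C\|f\|_{L^p}$ for solutions, and obtaining that bound is the hard step, not a formality: the paper gets it by the method of continuity in a parameter $\gamma\in[0,1]$ multiplying the nonlocal term, where the case of degenerating constants $C_{\gamma_m}\to\infty$ is ruled out by combining the compactness of $w\mapsto(-\Delta)^s\widetilde w$ (your key lemma, which the paper also proves) with the maximum principle to exclude a nontrivial kernel in the limit. Equivalently, since the problem is linear, one can package this as a Fredholm alternative for a compact perturbation plus injectivity; either way, the missing mechanism in your plan is precisely how uniqueness plus compactness is converted into the a priori bound.

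The second gap is the uniqueness/maximum principle itself. Your energy argument does not close: testing $Lu=0$ against $u$ and writing $\int_\Omega (q\cdot\nabla u)\,u=-\tfrac12\int_\Omega(\textnormal{div}\,q)\,u^2+\tfrac12\int_{\pOm}(q\cdot\nu)\,u^2$ produces terms of size $\|q\|_{C^1}\|u\|_{L^2}^2$ with no sign, and under the standing hypotheses ($a\ge0$, $a\not\equiv0$, $q$ an arbitrary smooth drift, Neumann conditions so constants are not controlled by the Dirichlet energies) there is nothing to absorb them into. The paper instead proves Theorem \ref{Strong-Maximum} by a pointwise argument at an interior minimum: one first uses the explicit form of the extension $u_1$ and the condition $\cN_s u=0$ to show a negative minimum cannot sit on $\pOm$ and that $\widetilde u$ stays strictly above $\inf_\Omega u$ outside $\overline\Omega$, then mollifies (since $u$ is only $W^{2,p}$, not $C^2$) and evaluates $-\Delta$, $\nabla$, and $(-\Delta)^s$ at the minimum of $u^{\e/2}$ to reach a sign contradiction. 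Some replacement of this pointwise argument (or additional smallness hypotheses on $q$ that the theorem does not assume) is needed; as written, your uniqueness step would fail.
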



\section{Some functional spaces}

Let $\Omega\subset \R^N$ be an open bounded set. For a vector field $q:\Omega\to \R^N$, we write $q\in L^{\infty}(\Omega)$ (resp. $q\in \cC^{0,\alpha}(\overline{\Omega})$) whenever $q_{j}\in L^{\infty}(\Omega)$ (resp. $q_j\in \cC^{0,\alpha}(\overline{\Omega})$), $j=1,2,\cdots,N.$ We denote  by $\cC^{k,\alpha}(\overline{\Omega})$, $0<\alpha\le 1$, the Banach space of functions $u\in \cC^k(\overline{\Omega})$ such that derivative of order $k$ belong to $\cC^{0,\alpha}(\overline{\Omega})$ with the norm
\[
\|u\|_{\cC^{k,\alpha}(\overline{\Omega})}:= \|u\|_{\cC^{k}(\overline{\Omega})}+\sum_{|\tau|=k}[D^{\tau}u]_{\cC^{0,\alpha}(\overline{\Omega})},
\]
where the H\"older seminorm $[\cdot]_{\cC^{0,\alpha}(\overline{\Omega})}$ is given by
\[
 [u]_{\cC^{0,\alpha}(\overline{\Omega})}=\sup_{x,y\in\overline{\Omega},x\neq y}\frac{|u(x)-u(y)|}{|x-y|^{\alpha}}
\]
and $\cC^{0,\alpha}(\overline{\Omega})$ is the Banach space of functions $u\in \cC^0(\overline{\Omega})$ which are H\"older continuous with exponent $\alpha$ in $\Omega$ and the  norm  $\|u\|_{\cC^{0,\alpha}(\overline{\Omega})}=\|u\|_{L^{\infty}(\Omega)}+[u]_{\cC^{0,\alpha}(\overline{\Omega})}$. We refer the reader to the results of Silvestre \cite{S07} about the spaces that $(-\Delta)^su$ belongs to according to the regularity of the function $u.$

If $k\in \N$, as usual we set 
$$
W^{k,p}(\Omega):=\Big\{u\in L^p(\Omega)\;:\; \text{ $D^{\alpha}u$ exists for all $\alpha\in \N^{N}$, $|\alpha|\leq k$ and $D^{\alpha} u\in L^p(\Omega)$ }\Big\}
$$
for the Banach space of $k$-times (weakly) differentialable functions in $L^p(\Omega)$. Moreover, in the fractional setting, for $s\in(0,1)$ and $p\in[1,\infty),$ we set
$$
W^{s,p}(\Omega):=\Big\{u\in L^p(\Omega)\;:\; \frac{u(x)-u(y)}{|x-y|^{\frac{n}{p}+s}}\in L^{p}(\Omega\times \Omega)\Big\}.
$$
The space $W^{s,p}(\Omega)$ is a Banach space with the norm
$$
\|u\|_{W^{s,p}(\Omega)}=\Big(\|u\|_{L^p(\Omega)}^p+\iint_{\Omega\times \Omega}\frac{|u(x)-u(y)|^p}{|x-y|^{n+s p}}\ dxdy\Big)^{\frac{1}{p}}.
$$
Next, we   set ~$\cQ:= (\R^N\times\R^N)\setminus(\R^N\setminus\Omega)^2$ and  define the space $\cX^s(\Omega)$ by
\[
\cX^s(\Omega):= \left\{ u\in L^2(\Omega): \ u|_{\Omega}\in W^{1,2}(\Omega);\ [u]_{\cX^s(\Omega)}<\infty\right\}.
\]
Here,  the corresponding Gagliardo seminorm $[\cdot]_{s}$  is given by
\[
\left([u]_{s}\right)^2:=\int_{\Omega}|\nabla u|^2\ dx+\iint_{\cQ}\frac{|u(x)-u(y)|^2}{|x-y|^{N+2s}}\ dxdy.
\]
We also define the space 
\[
\cX_K^s(\Omega):= \left\{ u\in L^2(\Omega): \ u|_{\Omega}\in W^{1,2}(\Omega);\ [u]_{s,K}<\infty\right\},
\]
where,
\[
\left([u]_{s,K}\right)^2:=\int_{\Omega}|\nabla u|^2\ dx+\int_{\Omega}\int_{\Omega}|u(x)-u(y)|^2K_{\Omega}(x,y)\ dxdy,
\]
and $K_{\Omega}:\Omega\times\Omega\to \R$ is a measurable (regional) kernel  given by
\begin{equation}\label{kernel-1}
K_{\Omega}(x,y):=\frac{1}{|x-y|^{N+2s}}+k_{\Omega}(x,y)
\end{equation}
with
\begin{equation}\label{kernel-2}
k_{\Omega}(x,y):=\int_{\R^N\setminus\Omega}\frac{1}{|x-z|^{N+2s}|y-z|^{N+2s}\displaystyle\int_{\Omega}\frac{1}{|z-z'|^{N+2s}}\ dz'}\ dz, \qquad x,y\in \Omega.
\end{equation}
Note  that the space $\cX^s(\Omega)$ $(\text{resp. ~}\cX_K^s(\Omega))$ is a Hilbert space when furnished with the scalar product $\langle u,v\rangle_{\cX^s(\Omega)}$ $(\text{resp. ~}\langle u, v\rangle_{\cX_K^s(\Omega)})$
\[
\langle u,v\rangle_{\cX^s(\Omega)} := \int_{\Omega}uv\ dx + \int_{\Omega}\nabla u\cdot \nabla v\ dx+
\iint_{\cQ}\frac{|u(x)-u(y)|^2}{|x-y|^{N+2s}}\ dxdy
\]
and the corresponding norm given $\|u\|_{\cX^s(\Omega)}=\langle u,u\rangle^{\frac{1}{2}}_{\cX^s(\Omega)}$ $(\text{resp. ~}\|u\|_{\cX^s(\Omega)}=\langle u,u\rangle^{\frac{1}{2}}_{\cX_K^s(\Omega)})$. Finally,  we define the space $\cL^1_s(\R^N)$ by 
\[
\cL^1_s(\R^N):= \big\{u:\R^N\to \R, \text{ such that $u$ is measurable~ and~ }  \|u\|_{\cL_s^1(\R^N)}<\infty\big\},
\]
where
\[
\|u\|_{\cL_1^s(\R^N)}:= \int_{\R^N}\frac{|u(y)|}{1+|y|^{N+2s}} dx.
\]

\begin{defi} Let $u\in \cC^{0,1}(\overline{\Omega})$  and  define the function
     $\widetilde{u}$ on $\R^N$ as
\begin{equation}\label{Extension-funct}
 \widetilde u(x)=
\begin{cases}
    u(x)\quad&\text{ if}\quad x\in \overline\Omega\\
    \\
u_1(x)\quad&\text{ if}\quad x\in \R^N\setminus\overline\Omega,
\end{cases}
\end{equation} 
where  \begin{equation}\label{Ext-function}
    u_1(x): 
    = \ \frac{\displaystyle\int_{\Omega}\frac{u(y)}{|x-y|^{N+2s}}\ dy}{\displaystyle\int_{\Omega}\frac{1}{|x-y|^{N+2s}}\ dy},\qquad  \text{$x\in \R^N\setminus\overline\Omega$}.
\end{equation}
\end{defi}
\begin{remark}We note that  $\cN_s\widetilde{u} (x)=0$ for all $x\in \R^N\setminus \overline{\Omega}.$
\end{remark}
We now recall the following results that lead to integration by parts in a fractional setting from \cite{AFR22}: 
\begin{lemma}[\cite{AFR22}]\label{Equiv-bilinear}
    Let $u,v:~\R^N\to \R$ be two functions such that $\cN_sv=0$  on \ $\R^N\setminus\overline\Omega$. Then
    \begin{equation}
    \begin{split}
    &\int_{\Omega}\int_{\Omega}(u(x)-u(y))(v(x)-v(y))K_{\Omega}(x,y)\ dxdy\\
    &\qquad\qquad\qquad\qquad=    C_{N,s}\iint_{\cQ}\frac{(\widetilde u(x)-\widetilde u(y))(v(x)-v(y))}{|x-y|^{N+2s}}\ dxdy
    \end{split}
    \end{equation}
\end{lemma}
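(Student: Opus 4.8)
The plan is to verify the identity by decomposing $\cQ$, up to a set of measure zero, into the three disjoint parts
\[
\cQ = (\Omega\times\Omega)\ \cup\ \big(\Omega\times(\R^N\setminus\overline\Omega)\big)\ \cup\ \big((\R^N\setminus\overline\Omega)\times\Omega\big),
\]
and showing that the ``diagonal'' block $\Omega\times\Omega$ accounts for the singular part $|x-y|^{-N-2s}$ of $K_\Omega$ while the two ``mixed'' blocks together account for the regional part $k_\Omega$ from \eqref{kernel-2}. Since the right-hand integrand is symmetric in $(x,y)$ and $\cQ$ is a symmetric set, the two mixed blocks contribute the same amount, so everything reduces to analysing
\[
I := \int_{\R^N\setminus\overline\Omega}\int_{\Omega} \frac{(\widetilde u(x)-\widetilde u(y))(v(x)-v(y))}{|x-y|^{N+2s}}\,dy\,dx,
\]
where $\widetilde u(y) = u(y)$ and $\widetilde u(x) = u_1(x)$ by \eqref{Ext-function}, since $y\in\Omega$ and $x\notin\overline\Omega$.

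The first key step is to use the hypothesis $\cN_s v = 0$, which by \eqref{Normalderivative} says exactly that $\int_{\Omega}\frac{v(x)-v(y)}{|x-y|^{N+2s}}\,dy = 0$ for a.e.\ $x\in\R^N\setminus\overline\Omega$; equivalently, $v$ agrees on $\R^N\setminus\overline\Omega$ with the averaging operator $x\mapsto \big(\int_\Omega v(z)|x-z|^{-N-2s}\,dz\big)\big/\big(\int_\Omega |x-z|^{-N-2s}\,dz\big)$ that produces $u_1$ from $u$. Writing the inner integrand of $I$ as $u_1(x)(v(x)-v(y)) - u(y)(v(x)-v(y))$ and integrating in $y$, the first summand drops out, leaving
\[
I = -\int_{\R^N\setminus\overline\Omega}\int_{\Omega} \frac{u(y)\,(v(x)-v(y))}{|x-y|^{N+2s}}\,dy\,dx.
\]

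Next I would treat the two resulting terms separately. For the $v(x)$-term I would substitute the averaging formula for $v(x)$ and apply Fubini; the emerging integral in $x$ over $\R^N\setminus\overline\Omega$ is, by \eqref{kernel-2}, exactly $k_\Omega(y,z)$. For the $v(y)$-term I would integrate in $x$ first and use the elementary identity $\int_\Omega k_\Omega(y,z)\,dz = \int_{\R^N\setminus\overline\Omega}|x-y|^{-N-2s}\,dx$, which follows from \eqref{kernel-2}. This gives $I = \int_\Omega\int_\Omega u(y)\,(v(y)-v(z))\,k_\Omega(y,z)\,dy\,dz$, and since $k_\Omega$ is symmetric a routine symmetrization in $(y,z)$ rewrites it as $\tfrac12\int_\Omega\int_\Omega (u(y)-u(z))(v(y)-v(z))\,k_\Omega(y,z)\,dy\,dz$. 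On the diagonal block $\widetilde u = u$, so that block contributes $C_{N,s}\int_\Omega\int_\Omega\frac{(u(x)-u(y))(v(x)-v(y))}{|x-y|^{N+2s}}\,dx\,dy$ to the right-hand side; adding $2I$ for the two mixed blocks and recalling $K_\Omega = |x-y|^{-N-2s} + k_\Omega$ produces the asserted equality.

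The computations are elementary, so the main obstacle is not the algebra but the analytic bookkeeping: justifying the interchanges of the order of integration and the termwise splittings, given that both kernels are singular along the diagonal and that $k_\Omega$ in addition blows up as $x\to\partial\Omega$ (where $\int_\Omega|x-z|^{-N-2s}\,dz$ degenerates). One has to use the integrability built into the functional framework of \cite{AFR22} --- membership of $u,v$ in the appropriate energy space together with boundary growth bounds for $k_\Omega$ --- to make each step absolutely convergent. A secondary, purely bookkeeping point is to track the constant $C_{N,s}$ through the averaging identity and \eqref{kernel-2} so that it factors out of the right-hand side exactly as in the statement.
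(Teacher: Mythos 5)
Your proposal is correct in substance, and it is essentially the argument behind this lemma in \cite{AFR22} (the paper itself does not reprove it, it only cites that reference): split $\cQ$ into $\Omega\times\Omega$ and the two symmetric mixed blocks, use $\cN_s v=0$ to kill the $u_1(x)\bigl(v(x)-v(y)\bigr)$ contribution, substitute the averaging representation of $v$ on $\R^N\setminus\overline\Omega$ and Fubini to produce $k_\Omega$, and symmetrize; each of these steps checks out, including the identity $\int_\Omega k_\Omega(y,z)\,dz=\int_{\R^N\setminus\Omega}|x-y|^{-N-2s}\,dx$. One point deserves correction rather than the reassurance in your last sentence: carrying out your computation with $K_\Omega$ exactly as defined in \eqref{kernel-1}--\eqref{kernel-2} (no constant) yields
\[
\int_{\Omega}\int_{\Omega}(u(x)-u(y))(v(x)-v(y))K_{\Omega}(x,y)\,dx\,dy=\iint_{\cQ}\frac{(\widetilde u(x)-\widetilde u(y))(v(x)-v(y))}{|x-y|^{N+2s}}\,dx\,dy ,
\]
i.e.\ \emph{without} the factor $C_{N,s}$ on the right; the constant appears as stated in the lemma only if the kernel is normalized as in \cite{AFR22}, where $K$ carries the factor $C_{N,s}$. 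So the constant does not ``factor out exactly as in the statement'' under the paper's definition of $K_\Omega$ --- this is a normalization mismatch in the transcription of the cited result, not a flaw in your scheme, but a careful write-up should say so explicitly rather than absorb it. Your closing remarks about justifying Fubini and the termwise splittings (integrability of the singular kernels and the blow-up of $k_\Omega$ near $\partial\Omega$, within the energy framework of \cite{AFR22}) are indeed the only analytic points that need to be added to make the computation rigorous.
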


From Lemma \ref{Equiv-bilinear} and \cite[Lemma 3.3]{DRV17}, we  deduce the   integration by parts formula 
\begin{equation}\label{Int-by-parts}
\begin{split}
\iint_{\cQ}\frac{(\widetilde u(x)-\widetilde u(y))(v(x)-v(y))}{|x-y|^{N+2s}}\ dxdy
&= \int_{\Omega}v(-\Delta)^s\widetilde u\ dx+\int_{\R^N\setminus\Omega}v\cN_s\widetilde u\ dx
\end{split}
\end{equation}
for $u$ and $v$ being two $\cC^2$ bounded functions in $\R^N$.

\subsection{Estimates on $\widetilde{u}$}

Let $ N<p<\infty$ and suppose $ u \in W^{2,p}(\Omega)$ and hence $u$ is $C^{0,1}(\overline{\Omega})$.  Then note that $ \widetilde{u}$ is smooth near $x $ for any $ x \notin \overline{\Omega}$.  So the only real question on the smoothness of $ \widetilde{u}$ is when $x \notin \Omega$ and $ \delta(x)=dist(x, \pOm)<1$.
  For $ x \notin \overline{\Omega},$ let $ \hat{x} \in \pOm$ be such that \[ |x- \hat{x}| = \inf_{z \in \pOm} |z-x|.\] 
 
\begin{lemma} 
\label{pointbound}
Let $\Omega$ be an open bounded set of $\R^N$ with smooth boundary and let $ N<p<\infty$ and suppose $ u \in W^{2,p}(\Omega)$ with $ \| u \|_{W^{2,p}} \le 1$.  The following estimates are all independent of $u$.  
\begin{enumerate}[(i)]
    \item For $ 0<s<1/2$ there is some $C$ such that for all  
      $~x \in \R^N\setminus \overline{\Omega}$ \ \text{ with } $\delta(x)<1$ we have 
\[ 
| \nabla u_1(x)| \le  \frac{C}{(\delta(x))^{1-2s}}.
\]  
Therefore, $ \widetilde{u} \in W^{1,q}_{loc}(\R^N)$ for all $ 1<q< \frac{1}{1-2s}$ ---  after applying the co-area formula.

\item 
If $ s= 1/2$ then  $ \widetilde{u} \in W^{1,q}_{loc}(\R^N)$  for all   $ 1\le q< \infty$.

\item  Let $ 1/2<s<1$.  Then, there is some $C>0$ such that $ | \nabla \widetilde{u}(x)| \le C$ on $ \R^N$.   

  \end{enumerate}  
\end{lemma}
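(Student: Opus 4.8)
The plan is to estimate the extension function $u_1$ directly from its defining formula \eqref{Ext-function}, exploiting that for $u \in W^{2,p}(\Omega)$ with $p > N$ we have $u \in C^{0,1}(\overline{\Omega})$ by Morrey embedding, with Lipschitz constant controlled by $\|u\|_{W^{2,p}(\Omega)} \le 1$. Write $u_1(x) = \frac{A(x)}{B(x)}$ where $A(x) = \int_\Omega u(y)|x-y|^{-N-2s}\,dy$ and $B(x) = \int_\Omega |x-y|^{-N-2s}\,dy$ for $x \notin \overline\Omega$. The key observation is that since $x \notin \overline\Omega$, both integrands are smooth in $x$ (the singularity $y = x$ never occurs), so differentiation under the integral sign is legitimate and gives $\nabla A(x) = -(N+2s)\int_\Omega u(y)(x-y)|x-y|^{-N-2s-2}\,dy$ and similarly for $\nabla B$. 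Then $\nabla u_1 = \frac{\nabla A \cdot B - A \cdot \nabla B}{B^2}$. The crucial trick to get a \emph{gain} in the power of $\delta(x)$ is to subtract a constant: writing $u_1(x) - u(\hat x) = \frac{1}{B(x)}\int_\Omega (u(y) - u(\hat x))|x-y|^{-N-2s}\,dy$, and noting $\nabla u_1(x) = \nabla\bigl(u_1(x) - u(\hat x)\bigr)$ only if $\hat x$ were fixed --- so instead one differentiates the numerator-minus-constant representation at a fixed reference point, exploiting $|u(y) - u(\hat x)| \le C|y - \hat x| \le C(|y - x| + \delta(x))$.

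The core estimates needed are the two-sided bounds on $B(x)$ and the bounds on the "moment" integrals. First, I would show $B(x) \asymp \delta(x)^{-2s}$ for $\delta(x) < 1$: the lower bound comes from integrating over a fixed-size portion of $\Omega$ near $\hat x$, and the upper bound $B(x) \le C\int_{\delta(x)}^{\diam\Omega + \delta(x)} r^{-N-2s} r^{N-1}\,dr \asymp \delta(x)^{-2s}$ comes from the coarea/polar-coordinates estimate $|\{y \in \Omega : |x-y| \approx r\}| \lesssim r^{N-1}$ combined with $|x - y| \ge \delta(x)$ for all $y \in \Omega$. Similarly $\int_\Omega |x-y| \cdot |x-y|^{-N-2s}\,dy = \int_\Omega |x-y|^{-N-2s+1}\,dy \asymp \delta(x)^{1-2s}$ when $s > 1/2 - 1/2 = 0$... more precisely it behaves like $\delta(x)^{1-2s}$ when $2s - 1 > 0$ is false, i.e. for $s < 1/2$ the relevant integral near the boundary gives the dominant scale $\delta(x)^{1-2s}$; for $s > 1/2$ it converges to a finite constant as $\delta(x) \to 0$; for $s = 1/2$ it is logarithmically divergent, giving the borderline $W^{1,q}$ for all finite $q$ but not $q = \infty$. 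Putting these together in $\nabla u_1 = \nabla A/B - A\nabla B/B^2$, after replacing $u$ by $u - u(\hat x)$ in the numerators (which changes nothing since $\int_\Omega \nabla_x|x-y|^{-N-2s}\,dy \cdot$ const cancels between the two terms), one gets $|\nabla u_1(x)| \le C \cdot \frac{\delta(x)^{1-2s} \cdot \delta(x)^{-2s} + \delta(x)^{-2s}\cdot\delta(x)^{-2s}\cdot \delta(x)\cdots}{\delta(x)^{-4s}}$, and the arithmetic collapses to $|\nabla u_1(x)| \le C\delta(x)^{-(1-2s)}$ for $s < 1/2$, $|\nabla u_1(x)| \le C$ for $s > 1/2$, and the logarithmic case for $s = 1/2$.

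For the $W^{1,q}_{loc}$ conclusions, I would fix a large ball $B_R \supset \overline\Omega$ and estimate $\int_{B_R \setminus \overline\Omega} |\nabla u_1(x)|^q\,dx$. Using $|\nabla u_1(x)| \le C\delta(x)^{-(1-2s)}$ and the coarea formula for the distance function (valid since $\partial\Omega$ is smooth), $\int_{B_R\setminus\overline\Omega, \delta(x)<1} \delta(x)^{-q(1-2s)}\,dx \asymp \int_0^1 t^{-q(1-2s)}\,dt$, which is finite precisely when $q(1-2s) < 1$, i.e. $q < \frac{1}{1-2s}$; on $\{\delta(x) \ge 1\}$ the function $u_1$ is uniformly smooth and contributes a bounded amount. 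For $s = 1/2$ the bound $|\nabla u_1(x)| \lesssim |\log\delta(x)|$ is $q$-integrable near $\partial\Omega$ for every finite $q$. For $s > 1/2$ the pointwise bound $|\nabla\widetilde u| \le C$ on all of $\R^N$ is immediate once one also checks the gradient doesn't blow up across $\partial\Omega$ --- here one needs that $u_1(x) \to u(\hat x)$ with the right modulus so that $\widetilde u$ stays Lipschitz, which follows from the same $|u_1(x) - u(\hat x)| \le C\delta(x)$ type estimate.

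The main obstacle I anticipate is twofold: (a) carefully justifying the subtraction of the constant $u(\hat x)$ to produce the gain of one power of $\delta(x)$ in the numerator --- one must verify that the "constant part" genuinely cancels in the quotient-rule expression for $\nabla u_1$, which it does because $\nabla_x A$ and $\nabla_x B$ scale compatibly, but this requires writing $\nabla u_1 = \frac{1}{B}\nabla_x\!\int_\Omega (u(y) - u(\hat x))|x-y|^{-N-2s}dy - \frac{u_1 - u(\hat x)}{B}\nabla_x B$ and bounding each term; and (b) the precise geometric estimate $|\{y \in \Omega : r \le |x-y| \le r + dr\}| \le C r^{N-1}\,dr$ uniformly for $x$ near $\partial\Omega$, together with the coarea formula $\int_{\{\delta < 1\}} g(\delta(x))\,dx = \int_0^1 g(t)\,\mathcal{H}^{N-1}(\{\delta = t\})\,dt$ with $\mathcal{H}^{N-1}(\{\delta = t\})$ bounded --- both standard for smooth bounded $\Omega$ but needing the smoothness hypothesis in an essential way. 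Everything else is elementary integration in polar coordinates around $\hat x$.
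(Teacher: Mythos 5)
Your strategy is exactly the paper's: write $u_1=A/B$, differentiate under the integral, subtract the constant $u(\hat x)$ (which cancels in the quotient rule), use the Lipschitz bound $|u(y)-u(\hat x)|\le C|y-\hat x|\le 2C|x-y|$, the two-sided bound $B(x)\asymp\delta(x)^{-2s}$, the moment estimates $\int_\Omega|x-y|^{-N-\tau}\,dy\asymp\delta(x)^{-\tau}$ for $\tau>0$, and finally the coarea formula over the level sets of $\delta$ for the $W^{1,q}_{loc}$ conclusions. The skeleton is correct and matches the paper's proof in all three cases, including the logarithmic bound at $s=1/2$.

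One slip needs fixing: your case analysis for the first moment integral $\int_\Omega|x-y|^{-(N+2s-1)}\,dy$ is swapped. Since the exponent is $N+(2s-1)$, this integral is bounded above and below by constants when $s<1/2$ (exponent $<N$, locally integrable, bulk contribution of order one --- it does \emph{not} decay like $\delta(x)^{1-2s}$), is $\asymp\log(1/\delta(x))$ at $s=1/2$, and blows up like $\delta(x)^{1-2s}$ only when $s>1/2$ --- the opposite of what you wrote. With your stated (swapped) values the bookkeeping in your display does not actually produce the bounds you then assert: for $s<1/2$ it would give $|\nabla u_1|\le C$, contradicting your own claimed $\delta(x)^{-(1-2s)}$ blow-up. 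With the corrected asymptotics the arithmetic does collapse as intended: the dangerous term is
$\bigl(\int_\Omega|x-y|^{-(N+2s-1)}dy\bigr)\bigl(\int_\Omega|x-y|^{-(N+2s+1)}dy\bigr)/B(x)^2\lesssim \delta^{-2s-1+4s}=\delta^{-(1-2s)}$ for $s<1/2$, $\lesssim\log(1/\delta)$ for $s=1/2$, and $\lesssim\delta^{1-2s}\cdot\delta^{-2s-1}\cdot\delta^{4s}=1$ for $s>1/2$, which is precisely how the paper argues (its inequality \eqref{All-case} together with \eqref{Estimate-Int2}, \eqref{diameter}, \eqref{Kinling}). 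Your additional remark in case (iii), that one should check $\widetilde u$ does not lose Lipschitz continuity across $\partial\Omega$, is a reasonable point the paper handles implicitly via continuity of $\widetilde u$ (from \cite{DRV17}) plus the one-sided gradient bounds; it is not an obstruction.
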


\begin{proof} Let $ x \in \R^N\setminus\overline{\Omega}$ with $ \delta(x)<1$. For simplicity, we set  
\[
 F(x)= \int_\Omega |x-y|^{-N-2s} dy.
\]
Since $u\in  W^{2,p}(\Omega)$~ with   $ p> N$, we have that $u\in \cC^{0,1}(\overline{\Omega})$. It follows from  \cite[Proposition 5.2]{DRV17} and the regularity of $\Omega$  that $\widetilde u$ is continuous in  $\R^N$. Moreover,  a direct computation shows that 
\[\begin{array}{lll}
    \frac{(F(x))^2 \nabla  u_1(x) }{N+2s} &=& \int_\Omega (u(y)-u( \hat x)) |x-y|^{-N-2s} dy \int_\Omega |x-y|^{-N-2s-2} (x-y) dy \vspace{10 pt}\\
    &&+ \int_\Omega |x-y|^{-N-2s} dy \int_\Omega ( u( \hat x)-u(y)) |x-y|^{-N-2s-2} (x-y) dy.
\end{array}\] 
We now use the bound on $u$ given by $ |u(y)- u( \hat x)| \le C_0 |y- \hat x|$ to give 
\[\begin{array}{lll}
    \frac{(F(x))^2 | \nabla u_1 (x)| }{N+2s} & \le & C_0 \int_\Omega \frac{|y- \hat{x}|}{|x-y|^{N+2s}}  dy \int_\Omega \frac{1}{|x-y|^{N+2s+1}} dy \vspace{10 pt}\\
    &&+ C_0 \int_\Omega \frac{1}{|x-y|^{N+2s}}  dy \int_\Omega \frac{|y- \hat x|}{|x-y|^{N+2s+1}}  dy.
\end{array}\] 
Now note that for $ y \in \Omega$ we have $ |x- \hat x| \le |x-y|$ by the definition of $ \hat x$ and hence we have 
\[| y - \hat x| \le |y-x| + |x- \hat x| \le 2 | y-x|.\] 
It follows using the above inequality  that  
\[\begin{array}{lll}
    \frac{(F(x))^2 | \nabla  u_1(x)| }{N+2s} & \le & C_1 \int_\Omega \frac{1}{|x-y|^{N+2s-1}}  dy \int_\Omega \frac{1}{|x-y|^{N+2s+1}} dy \vspace{10 pt}\\
    &&+ C_1 \int_\Omega \frac{1}{|x-y|^{N+2s}}  dy \int_\Omega \frac{1}{|x-y|^{N+2s}}  dy.
\end{array}\]  
Noticing  that  the second term  in the right hand side is just $(F(x))^2$, we have 
\begin{equation}\label{All-case}
\frac{ | \nabla  u_1(x)|}{N+2s} \le C_1 + C_2 {\frac{\displaystyle\int_\Omega \frac{1}{|x-y|^{N+2s-1}} dy \displaystyle{\int_\Omega  \frac{1}{|x-y|^{N+2s+1}} dy}}{(F(x))^2}}.
\end{equation}
Now, it well known from \cite[Lemma 2.1]{A20} that there  are constants $C_1>0$ and $C_2>0$ such that for any $x\in \R^N\setminus\overline\Omega$, we have 
\begin{equation}\label{Estimate-Int1} 
 C_1{\min\{(\delta(x))^{-2s},(\delta(x))^{-N-2s}\}} \le F(x)\le C_2{\min\{(\delta(x))^{-2s},(\delta(x))^{-N-2s}\}}. 
\end{equation}
With the assumption $\delta(x)<1$, $x\in \R^N\setminus\overline\Omega$, this reduces to 
\begin{equation}\label{Estimate-Int2} 
C_1(\delta(x))^{-2s}\le F(x)\le C_2(\delta(x))^{-2s}.
\end{equation} 
We have the more general result that for $ \tau>0$ there is some $C_1,C_2>0$ such that for $ x \notin \overline{\Omega}$ but with $ \delta(x)$ small we have 
\begin{equation}\label{Kinling}
\frac{C_1}{(\delta(x))^\tau} \le \int_\Omega \frac{1}{|x-y|^{N+\tau}} dy \le \frac{C_2}{(\delta(x))^\tau}.
\end{equation}

We now distinguish three cases:  ~$s\in (0,1/2)$,~ $s=1/2$ ~or ~$s\in (1/2,1)$.

\paragraph{Case (i): $0<s<1/2$.}  
Set $R_0=2\diam(\Omega),$ where $\diam(\Omega)$  is the diameter of $\Omega$. Since $\Omega$ is bounded and $\delta(x)<1$, $x\in \R^N\setminus\overline\Omega$, we have that $\Omega\subset B_{2R_0+1}(x)$. It follows that
\begin{equation}\label{diameter}
\displaystyle\int_\Omega \frac{1}{|x-y|^{N+2s-1}} dy\le \displaystyle\int_{B_{2R_0+1}(0)} \frac{1}{|z|^{N+2s-1}} dz= C\int_{0}^{2R_0+1} \rho^{-2s}\ d\rho = C(2R_0+1)^{1-2s}.
\end{equation}
Putting    \eqref{Estimate-Int2}, \eqref{diameter} and \eqref{Kinling} together, we get
\begin{equation*}
  \frac{ | \nabla  u_1(x)|}{N+2s} \le C_1 + \frac{C_3}{(\delta(x))^{1-2s}}.  
\end{equation*}  We then apply the coarea formula now to get the desired result.


\paragraph{Case (ii): $s=1/2.$} We know from \eqref{All-case} that
    \[ 
    | \nabla u_1(x)| \le C_1 +  C_2\frac{   \displaystyle{\int_{\Omega} \frac{1}{|x-y|^{N+2}}} dy   \int_\Omega \frac{1}{|x-y|^N} dy}{  (F(x))^2},
    \] 
    and this gives combining \eqref{Estimate-Int2} and \eqref{Kinling},
    \[ 
    | \nabla  u_1(x)| \le C_1 + C_3 \int_\Omega \frac{1}{|x-y|^N} dy.
    \]    
    We now estimate the last term.  Since $ \Omega \subset B_{2R_0+1}(x)$ and $ x \notin \Omega$ with $ \delta(x)<1$.   Then we have 
    \[ 
    G(x)=\int_{y \in \Omega} \frac{1}{|x-y|^N} dy   \le C\int_{ \{ z: \delta(x) \le |z| \le 2R_0+1 \}}  \frac{1}{|z|^N dz } = C\ln \left(  \frac{2R_0+1}{\delta(x)} \right).
    \]  
    Then we have, after using the co-area formula,  
    \begin{eqnarray*} 
    \int_{\{ x \notin \Omega, \delta(x)<1 \}} G(x)^q dx & \le & C^q  \int_{ \{x \notin \Omega: \delta(x)<1 \} }  \left( \ln \left(  \frac{2 R_0+1}{\delta(x)} \right) \right)^q dx \vspace{10 pt}\\
    & = & C^q \int_0^1 \left(  \int_{ \{x \notin \Omega: \delta(x)=t \}} \left\{ \ln \left(  \frac{2R_0+1}{\delta(x)} \right)  \right\}^q d \sigma(x) \right) dt \vspace{10 pt}\\ 
   & = & C^q \int_0^1 \left(  \int_{ \{x \notin \Omega: \delta(x)=t \}} \left\{ \ln \left(  \frac{2R_0+1}{t} \right)  \right\}^q d \sigma(x) \right) dt 
    \end{eqnarray*} There is some $C>0$ such that $ | \{x \notin \Omega: \delta(x)=t \}| \le C$ for all $0<t<1,$ where $|A|$ refers the the $N-1$ measure of $A$.  From this we see (after doing a change of variables $ r= 1/t$ that  we have 
    \[ 
    \int_{ \{x \notin \Omega, \delta(x)<1 \}} G(x)^q dx \le C \int_1^\infty \frac{ \left( \ln((2R_0+1)r) \right)^q  }{r^2} dr,
    \] 
    and this is finite for any $ 1 \le q < \infty$. 
 Therefore,  for any  $x\in  \R^N$ and $|x|\le R$, we have 
  \begin{align*}
  \int_{B_R}|\nabla\widetilde u(x)|^q\ dx \le C_R
   \end{align*}
This shows that $ \widetilde{u} \in W^{1,q}_{loc}(\R^N)$ for all $ 1<q< \infty$.

 \paragraph{Case (iii):  ${1}/{2}<s<1$.} For $x \notin \overline{\Omega}$, we  have from \eqref{All-case} that
\[
\begin{array}{ll}
   \frac{ | \nabla  u_1(x)|}{N+2s} \le C_1 + C_2 {\frac{\displaystyle\int_\Omega \frac{1}{|x-y|^{N+2s-1}} dy \displaystyle{\int_\Omega  \frac{1}{|x-y|^{N+2s+1}} dy}}{(F(x))^2}}
    \end{array}
    \]
  Now, since   $\Omega\subset \R^N\setminus B_{\delta(x)}(x)$ for  all $x\in \Omega^c$,  we  compute for $s>1/2$,  
   \begin{align*}
       \int_{\Omega} \frac{1}{|x-y|^{N+2s-1}}\ dy \le \int_{\R^n\setminus B_{d(x)}} \frac{1}{|z|^{N+2s-1}}\ dz =  C\int_{d(x)}^{\infty} \rho^{-2s}\ d\rho = C\delta(x)^{1-2s}. 
   \end{align*}
 This combined with     \eqref{Estimate-Int2} and \eqref{Kinling} yield 
   \[
     \frac{\left|\nabla u_1 (x)\right|}{N+2s}\le C_1+C_4.
   \]
 Hence, for all  $x\in \R^N\setminus\overline\Omega$ we have
   \[
   \left|\nabla  u_1 (x)\right|\le C.
   \]

\end{proof}


\begin{lemma}\label{compactness} The following results hold: 
\begin{enumerate} \item Suppose $\frac{N-1}{2N}<s<1/2$ and $ N<p<\frac{1}{1-2s}$. Then, the mapping $ u \mapsto (-\Delta)^s \widetilde{u}$ is continuous and compact from $ W^{2,p}(\Omega)$ to $ L^p(\Omega)$. 

\item Suppose $p>N$ and $ 1/2 \le s< \frac{1}{2}+\frac{1}{2p}$.  The mapping $ u \mapsto (-\Delta)^s \widetilde{u}$ is continuous and compact from $ W^{2,p}(\Omega)$ to $ L^p(\Omega)$.  


\end{enumerate} 
    \end{lemma}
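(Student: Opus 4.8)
The plan is to reduce the statement to a combination of two ingredients: (a) a uniform $L^p$-bound on $(-\Delta)^s\widetilde u$ in terms of $\|u\|_{W^{2,p}(\Omega)}$, which gives continuity (since $u\mapsto\widetilde u$ and the integral operator defining $(-\Delta)^s$ are linear), and (b) a gain of regularity — e.g. showing the image actually lies in some slightly better space $W^{\varepsilon,p}$ or a space of functions with a bit of Hölder continuity — so that compactness follows from a Rellich-type embedding together with the boundedness of $\Omega$. I would split $(-\Delta)^s\widetilde u$ on $\Omega$ into the ``interior'' contribution and the contribution of the far region $\R^N\setminus\overline\Omega$, where $\widetilde u=u_1$, and treat the two separately.

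\textbf{Interior part.} For $x\in\Omega$, write
\[
(-\Delta)^s\widetilde u(x)=C_{N,s}\,\mathrm{P.V.}\!\int_{\R^N}\frac{u(x)-\widetilde u(y)}{|x-y|^{N+2s}}\,dy
= (-\Delta)^s u(x) + C_{N,s}\int_{\R^N\setminus\overline\Omega}\frac{u(x)-u_1(y)}{|x-y|^{N+2s}}\,dy,
\]
where in the first term $u$ is understood as extended by itself only over $\Omega$ and then by $u_1$ outside — more precisely I would keep everything in terms of $\widetilde u$ and just isolate the ``tail'' integral $R(x):=\int_{\R^N\setminus\overline\Omega}\frac{u(x)-u_1(y)}{|x-y|^{N+2s}}\,dy$. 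Since $\delta(x)$ is bounded below only near $\partial\Omega$ is problematic, but note that for $x\in\Omega$ and $y\notin\overline\Omega$ we have $|x-y|\ge\delta(x)$ is false; rather the singularity is integrable once $x$ is in the open set $\Omega$ because the kernel is singular only on the diagonal which lies inside $\Omega$. The genuinely singular part $\mathrm{P.V.}\int_{\Omega}\frac{u(x)-u(y)}{|x-y|^{N+2s}}dy$ is controlled by the classical fact (Silvestre \cite{S07}) that $u\in W^{2,p}(\Omega)\subset C^{1,\alpha}$ near any interior point, giving $(-\Delta)^s u\in C^\alpha_{loc}$; the boundary behaviour is handled by the bound $F(x)\le C\delta(x)^{-2s}$ from \eqref{Estimate-Int2} together with $\|u\|_{L^\infty}\le C$, so $|R(x)|\lesssim \delta(x)^{-2s}$ plus the integral of $|u_1|$ against the kernel, and one checks that $\delta(\cdot)^{-2s}\in L^p$ near $\partial\Omega$ precisely when $2sp<1$, i.e. $s<\frac{1}{2p}+\frac12$ (case 2) or, in case 1, when $s<1/2$ and $p<\frac{1}{1-2s}$ — wait, this needs $2sp<1$ which is much stronger; so the correct count is that the boundary blow-up of $(-\Delta)^s\widetilde u$ is of order $\delta^{-(2s-?)}$ and the stated ranges of $(s,p)$ are exactly what makes the resulting power $L^p$-integrable against the $(N-1)$-dimensional collar via the co-area formula. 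I would carry out this exponent bookkeeping carefully — \textbf{this is the step I expect to be the main obstacle}, because it requires a sharp pointwise estimate on $(-\Delta)^s\widetilde u(x)$ near $\partial\Omega$ (not merely on $\nabla u_1$ as in Lemma \ref{pointbound}), and the two regimes $s<1/2$ and $s\ge 1/2$ produce different dominant terms.

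\textbf{Exterior part and compactness.} For $x\notin\overline\Omega$ with $\delta(x)<1$, I would estimate $(-\Delta)^s\widetilde u(x)=C_{N,s}\int_{\Omega}\frac{u_1(x)-u(y)}{|x-y|^{N+2s}}\,dy+C_{N,s}\,\mathrm{P.V.}\!\int_{\R^N\setminus\overline\Omega}\frac{u_1(x)-u_1(y)}{|x-y|^{N+2s}}\,dy$; for the first integral use the definition \eqref{Ext-function} of $u_1$, which makes the ``zeroth order'' part cancel, leaving a first-difference that is controlled by the Lipschitz bound on $u$ and by $F(x)$, hence by a power of $\delta(x)$; for the second, use the gradient bounds of Lemma \ref{pointbound} on $u_1$ to see that the principal-value integral is controlled by $|\nabla u_1|$ locally and by the tail, again a power of $\delta(x)$. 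In both stated parameter ranges these powers are $L^p$ over the exterior collar $\{\delta<1\}$ after the co-area formula (this is exactly where $\frac{N-1}{2N}<s$ enters in part 1 and $s<\frac12+\frac{1}{2p}$ in part 2). Continuity of $u\mapsto(-\Delta)^s\widetilde u$ then follows because all estimates are linear in $\|u\|_{W^{2,p}(\Omega)}$. For compactness, I would show the slightly stronger statement that the image is bounded in $W^{\sigma,p}(\Omega)$ for some $\sigma>0$ — established by estimating the $W^{\sigma,p}$-Gagliardo seminorm of $(-\Delta)^s\widetilde u$ using the same pointwise bounds and the extra decay in $|x-y|$ — and then invoke the compact embedding $W^{\sigma,p}(\Omega)\hookrightarrow\hookrightarrow L^p(\Omega)$ on the bounded smooth domain $\Omega$; alternatively, since near $\partial\Omega$ the functions are merely $L^p$ with a controlled singularity, I would combine interior equicontinuity (from $(-\Delta)^s\widetilde u\in C^\alpha_{loc}(\Omega)$, Arzelà–Ascoli on compact subsets) with a uniform smallness tail near $\partial\Omega$ coming from the $L^p$-integrability of the dominating power of $\delta$, which yields precompactness in $L^p(\Omega)$ by a standard truncation argument. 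The two cases are handled by the identical scheme, only the exponent computations differ.
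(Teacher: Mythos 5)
There is a genuine gap, and you have in fact flagged it yourself: the whole lemma hinges on an $L^p(\Omega)$ bound, uniform over $\|u\|_{W^{2,p}(\Omega)}\le 1$, for the part of $(-\Delta)^s\widetilde u(x)$, $x\in\Omega$, coming from points near and across $\partial\Omega$, and your proposal never produces it. The only concrete estimate you write down for that piece is $|R(x)|\lesssim\delta(x)^{-2s}$ via $\|u\|_{L^\infty}$ and \eqref{Estimate-Int2}, which is integrable in $L^p$ only if $2sp<1$ --- impossible when $p>N$ --- and you then defer the ``exponent bookkeeping'' as the expected main obstacle. The point is that no pointwise bound of the form $\delta(x)^{-\beta}$ with $\beta p<1$ is available here; the paper circumvents pointwise bounds entirely. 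For $s<1/2$ it writes the near-boundary contribution as a first difference, $|\widetilde u(x)-\widetilde u(y)|\le|z|\int_0^1|\nabla\widetilde u(x+tz)|\,dt$ with $z=y-x$, applies Jensen's inequality twice and Fubini, and lands on $\int_{B_R}|\nabla\widetilde u|^p$, which is finite precisely because Lemma \ref{pointbound}(i) gives $|\nabla u_1|\lesssim\delta^{2s-1}$ and $p<\frac{1}{1-2s}$; this is where that hypothesis enters, a connection your outline does not make (you invoke Lemma \ref{pointbound} only for the evaluation of $(-\Delta)^s\widetilde u$ at exterior points, which is irrelevant since the target space is $L^p(\Omega)$). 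For $s\ge 1/2$ the first-difference bound alone is no longer enough, and the paper switches to the symmetric second-difference form of $(-\Delta)^s$, splits $\{|z|\le1\}$ into the sets $A_x^1,\dots,A_x^4$, uses the Sobolev extension and $\|D^2v\|_{L^p(\R^N)}$ on $A_x^1$, and on $A_x^{2,3,4}$ exploits $|z|>\delta(x)$ together with a weight $\delta(x)^{-\alpha p}$ and a H\"older/Jensen argument with $\alpha p<1$ and $2s-1-\alpha<0$; this is exactly how the restriction $s<\frac12+\frac1{2p}$ arises, which in your text appears only as an unexplained endpoint.

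The compactness half is likewise only a sketch resting on the same missing estimates: boundedness of the image in some $W^{\sigma,p}(\Omega)$, or a dominating $L^p$ power of $\delta$ giving uniform smallness in a boundary collar, are additional claims you would have to prove and are not obviously easier than the lemma itself. The paper's route is more economical: by reflexivity it suffices to take $u_m\rightharpoonup0$ in $W^{2,p}(\Omega)$, deduce $\nabla\widetilde u_m\to0$ a.e., use the $u$-independent bounds of Lemma \ref{pointbound} with dominated convergence to get $\widetilde u_m\to0$ in $W^{1,p}(B_R)$, and then rerun the same Jensen/Fubini estimates to conclude $(-\Delta)^s\widetilde u_m\to0$ in $L^p(\Omega)$. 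So while your decomposition into a regional part plus a tail is a reasonable starting point, the proposal as written identifies the difficulty rather than resolving it, and the stated parameter ranges are never actually used where they must be.
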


    \begin{proof} 1. For the convenience of the reader we show the continuous argument and compact argument in separate steps.   Let $ u \in W^{2,p}(\Omega)$ with $ \|u \|_{W^{2,p}(\Omega)} \le 1$ and   let $ x \in \Omega$ and then note we have 
\[ (-\Delta)^s \tilde{u}(x) = I + II + III \] where 
\[ I(u)(x) = \int_{ \{ y \in \Omega: |y-x| \le 1 \}} \frac{  \tilde{u}(x) - \tilde{u}(y) }{|x-y|^{N+2s}} dy, \] 

\[ II(u)(x) = \int_{ \{ y \in \R^N\setminus\overline{\Omega}: |y-x| \le 1 \}} \frac{  \tilde{u}(x) - \tilde{u}(y) }{|x-y|^{N+2s}} dy, \] 
\[ \text{and }~III(u)(x) = \int_{ \{ y : |y-x| \ge  1 \}} \frac{  \tilde{u}(x) - \tilde{u}(y) }{|x-y|^{N+2s}} dy. \]  Note that 
\[ | I(u)(x)| \le \int_{ \{ y \notin \Omega: |y-x| \le 1 \}}  \frac{C|x-y|}{|x-y|^{N+2s}} dy \le C_2,\] since $ s<\frac{1}{2}$.  \noindent Also, note that 
\[ | III(u)(x)| \le \int_{ \{ y : |y-x| \ge  1 \}} \frac{C}{|x-y|^{N+2s}} dy,\] since $ \widetilde{u}$ is bounded on $ \R^N$ and hence $|III(u)(x)|$ is bounded in $ \Omega$ by some $C$. We now estimate $II(u)$.  Using $ z=y-x$ we have 
\[ | II(u)(x)| \le \int_{|z| \le 1} \frac{ | \widetilde{u}(x+z)- \widetilde{u}(x)|}{ |z|^{N+2s}} dz.\] Note that 
\[ | \widetilde{u}(x+z)- \widetilde{u}(x)| \le |z| \int_0^1 | \nabla \widetilde{u}(x+ t z)| dt.\] Hence, we have 
\[\begin{array}{lll}
    \int_\Omega | II(u)(x)|^p dx & \le & \int_\Omega \left(  \int_{|z| \le 1}  \frac{1}{|z|^{N+2s-1}} \int_0^1 | \nabla \widetilde{u}(x+t z)| dt dz \right)^p dx \vspace{10 pt} \\
    & \le & C \int_\Omega \int_{|z| \le 1} \frac{1}{|z|^{N+2s-1}} \int_0^1 | \nabla \widetilde{u}(x+t z)|^p dt dz dx \quad \mbox{ \footnotesize{(Jensen's inquality applied twice)}} \vspace{10 pt}\\
    &=& C  \int_0^1 \int_{|z| \le 1} \frac{1}{|z|^{N+2s-1}} \left( \int_\Omega | \nabla \widetilde{u}(x+t z)|^p dx \right) dz dt \vspace{10 pt}\\
    & \le & C \int_0^1 \int_{|z| \le 1} \frac{1}{|z|^{N+2s-1}} \left( \int_{|x| \le R}  | \nabla \widetilde{u}(x)|^p dx \right) dz dt,
\end{array} \]
for some large $R$ and hence we have 
\[ \int_\Omega | II(u)(x)|^p dx \le C_2 \int_{|x| \le R} | \nabla \widetilde{u}(x)|^p dx.\] Combining this with the results on $I(u)$ and $III(u),$  we see that $ \Delta^s \widetilde{u} \in L^p(\Omega)$ and is continuous from $ W^{2,p}(\Omega)$.  

\noindent We now consider the compactness.   Since $ W^{2,p}(\Omega)$ is a reflexive space its sufficient to show that if $ u_m \rightharpoonup 0$ in $ W^{2,p}(\Omega)$ then $ \Delta^s \widetilde{u}_m \rightarrow 0$ in $L^p(\Omega)$.  Let $u_m \rightharpoonup 0$ in $ W^{2,p}(\Omega)$ and hence it converges to zero in $W^{1,p}(\Omega)$ and uniformly in $\Omega$.

\[\begin{array}{lll}
    \frac{(F(x))^2 \nabla \widetilde u_m(x) }{N+2s} &=& \int_\Omega (u_m(y)-u_m( \hat x)) |x-y|^{-N-2s} dy \int_\Omega |x-y|^{-N-2s-2} (x-y) dy \vspace{10 pt}\\
    &&+ \int_\Omega |x-y|^{-N-2s} dy \int_\Omega ( u_m(\hat x)-u_m(y)) |x-y|^{-N-2s-2} (x-y) dy.
\end{array}\]   From this we see that $ |\nabla \widetilde u_m(x)| \rightarrow 0 $ a.e. in $\R^N$ and note that we can use the result of Lemma \ref{pointbound} with the dominated convergence theorem to see that $ \widetilde{u}_m \rightarrow 0$ in $W^{1,p}(B_R)$ for all $ 0<R< \infty$. 

\noindent Let $ x \in \Omega$ and then note 
\[ (-\Delta)^s \widetilde u_m(x)= J_1(u_m)(x)+ III(u_m)(x)\] where 
\[ J_1(u_m)(x)= I(u_m)(x) + II(u_m)(x) = \int_{ \{ y: |y-x| \le 1 \}}  \frac{  \widetilde u_m(x) - \widetilde u_m(y)}{ |x-y|^{N+2s}} dy.\]

\noindent As before, we can write this as 
\[ \begin{array}{l}\int_\Omega | J_1(u_m)(x)|^p dx \vspace{10 pt}\\
\le C_1\int_0^1 \int_{ \{|z| \le 1 \} } \frac{1}{|z|^{N+2s-1}} \left( \int_{x \in \Omega} | \nabla  \widetilde u_m(x+tz) |^p dx \right) dz dx \vspace{10 pt}\\
\le C_2 \int_{|x|<R} | \nabla \widetilde u_m(x)|^p dx,\end{array}\] for some large $R$ and we know this goes to zero from the earlier results.  

\noindent Let $ R>1$ be big and note we have 
\[\begin{array}{lll}
    |III(u_m)(x)| & \le & \int_{ \{y: 1 \le |y-x| \le R \}} \frac{ | \widetilde u_m(x) | 
    + | \widetilde u_m(y)|}{ |x-y|^{N+2s}} dy \vspace{12 pt}\\
    &&+ \int_{ \{ y: |y-x| \ge R \} } \frac{ | \widetilde u_m(x) | + | \widetilde u_m(y)|}{ |x-y|^{N+2s}} dy \vspace{12 pt}\\
    & \le & \int_{ \{y: 1 \le |y-x| \le R \}} \frac{ | \widetilde u_m(x) | + | \widetilde u_m(y)|}{ |x-y|^{N+2s}} dy + C R^{-2s},
    \end{array}\] where $C$ is from the fact that $ | \widetilde u_m (x)| \le C_1$ on $ \R^N$ (independent of $m$).  From this we see that 
    \[\begin{array}{lll}
        \int_\Omega |III(u_m)(x)|^p dx & \le & C_p R^{-2sp} + C_p \int_\Omega \left( \int_{ 1 \le |y-x| \le R}  \frac{ |\widetilde u_m(x)| + | \widetilde u_m(y)|}{ |x-y|^{N+2s}} dy \right)^p dx\vspace{10 pt}\\
        & \le & C_p R^{-2sp}+ C_p \int_\Omega \left( 2\sup_{ |\zeta| \le R} | \widetilde u_m( \zeta)| \int_{|z| \ge 1} \frac{1}{|z|^{N+2s}} dz \right)^p dx
    \end{array}\] and note that the second term goes to zero when $m \rightarrow \infty.$ Hence, we have 
    \[ \limsup_m \int_\Omega |III(u_m)(x)|^p dx \le C_p R^{-2sp},\] and consequently $ \int_\Omega | III(u_m)(x)|^p dx \rightarrow 0$ since we can set $ R \rightarrow \infty$. \\ 

    \noindent
    2.  
    We now take $ 1/2 \le s <1$ and for these cases we split the integral in the definition of the fractional Laplacian as
\[\begin{array}{lll}
    (-\Delta)^s\widetilde u(x) & = \frac{C_{N,s}}{2}\int_{\R^N}\frac{[\widetilde u(x)-\widetilde u(x+z)]+[\widetilde u(x)-\widetilde u(x-z)]}{|z|^{N+2s}}\ dz\vspace{10pt}\\
    &=\frac{C_{N,s}}{2} \sum_{i=1}^4\int_{A_x^i}\frac{[\widetilde u(x)-\widetilde u(x+z)]+[\widetilde u(x)-\widetilde u(x-z)]}{|z|^{N+2s}}\ dz\vspace{10pt}\\
    &\qquad\quad+ \frac{C_{N,s}}{2}\int_{\{z\in\R^N:|z|>1\}}\frac{[\widetilde u(x)-\widetilde u(x+z)]+[\widetilde u(x)-\widetilde u(x-z)]}{|z|^{N+2s}}\ dz,
    \end{array}\]
 where for $i=1,\cdots,4$, the sets $A_x^i$ are defined as
\[\begin{array}{lll}
A_x^1=\{z:  |z| \le 1, x+z, x-z \in \Omega \},\vspace{10 pt}\\
A_x^2=\{z:  |z| \le 1, x+z \notin \Omega, x-z \in \Omega \},\vspace{10 pt}\\
A_x^3=\{z:  |z| \le 1, x+z \in \Omega, x-z \notin \Omega \},\vspace{10 pt}\\
A_x^4=\{z:  |z| \le 1, x+z \notin \Omega, x-z \notin \Omega \}.
\end{array}\]
We first estimate the following 
\[ 
\begin{array}{ll}
&\int_{x \in \Omega}  \left(  \int_{\{z\in \R^N:~ |z|>1\}}  \frac{ | \widetilde u(x+z) + \widetilde u(x-z) - 2 \widetilde u(x) | }{|z|^{N+2s}} dz \right)^p dx.
\end{array}
\]  First note that $  \sup_{z \in \R^N} | \widetilde{u}(z)| \le \sup_{x \in \Omega} | u(x)|$ and hence the above quantity is bounded,
\[ 
\begin{array}{ll}
&\int_{x \in \Omega}  \left(  \int_{\{z\in \R^N:~ |z|>1\}}  \frac{ | \widetilde u(x+z) + \widetilde u(x-z) - 2 \widetilde u(x) | }{|z|^{N+2s}} dz \right)^p dx\le  C \| u \|_{W^{2,p}(\Omega)}^p.
\end{array}
\]

We now estimate the integrals over $A_x^i$ for $i=1,\cdots,4.$ 

\paragraph{The $A_x^1$ term.}  Let $1/2 \le s<1$,  $ u \in W^{2,p}(\Omega)$ with $ \|u\|_{W^{2,p}} \le 1$ and let $v$ denote is $ W^{2,p}(\Omega)$ extension to all of $ \R^N$ which is compactly supported.  Using a density we assume $u,v$ smooth.  We want to estimate 
\[ 
\int_{x \in \Omega}  \left(  \int_{z \in A_x^1}  \frac{ | \widetilde u(x+z) + \widetilde u(x-z) - 2 \widetilde u(x) | }{|z|^{N+2s}} dz \right)^p dx,
\] 
and note we can replace $\widetilde u$ with $u$ in $A^1_x$ and then we can replace $u$ with $v$.  We now estimate this quantity.  First note that for $|z| \le 1$ we have 
\[ 
| v(x+z)-v(x) - \nabla v(x) \cdot z| \le |z|^2 \int_0^1 \int_0^1 |D^2 v(x+ t \tau z)| d \tau d t,
\]
and from this we see that 
\[ 
| v(x+z)+ v(x-z) - 2 v(x)| \le |z|^2 \int_0^1 \int_0^1 |D^2 v(x \pm t \tau z)| d \tau d t,
\]  
where the $\pm$ indicates there is two terms we need to consider.   Then we have 
\[
     \int_{x \in \Omega}  \left(  \int_{z \in A_x^1}  \frac{ | \widetilde u(x+z) + \widetilde u(x-z) - 2 \widetilde u(x) | }{|z|^{N+2s}} dz \right)^p dx\] 
     is bounded above by 
     \[ 
     \int_{x \in \R^N} \left( \int_{z \in A_x^1} |z|^{-N-2s+2} \left( \int_0^1 \int_0^1 |D^2 v(x \pm t \tau z)| d \tau d t \right) dz \right)^p dx 
     \] 
     and we can apply Jensen's inequality twice to get this bounded above by 
     \[
     C_1 \int_{x \in \R^N} \int_{z \in A_x^1} |z|^{-N-2s+2} \int_0^1 \int_0^1 |D^2 v(x \pm t \tau z)|^p d \tau dt dz dx \] 
     and by Fubini we see this bounded above by 
     \[ 
     C \int_0^1 \int_0^1 \int_{|z| \le 1} |z|^{-N-2s+2} \left( \int_{ \R^N} |D^2 v(x \pm t \tau z)|^p dx \right) dz d t d \tau,
     \]
     and the Extension Theorem (see \cite[Theorem 1, Page 259]{E22}) we have the term in the brackets bounded by the constant 
     \[ 
     \int_{ \R^N} |D^2 v(x)|^p dx\le C\|u\|^p_{W^{2,p}(\Omega)}
     \] and this gives us the desired bound, that is
  \[ 
\int_{x \in \Omega}  \left(  \int_{z \in A_x^1}  \frac{ | \widetilde u(x+z) + \widetilde u(x-z) - 2 \widetilde u(x) | }{|z|^{N+2s}} dz \right)^p dx\le C\|u\|^p_{W^{2,p}(\Omega)}.
\]

\paragraph{The $A_x^i$ term for $i=2,3,4$.} Note that if $z \in A_x^i$ for $ i=2,3,4,$ we must have $ |z| >\delta(x)$. 
 In what follows we will estimate 
\[\int_{x \in \Omega} \left( \int_{z \in A_x^i} \frac{  | \widetilde u(x+z)- \widetilde u(x)|}{|z|^{N+2s}} dz \right)^p dx.\] The same argument can be used to also estimate 
\[\int_{x \in \Omega} \left( \int_{z \in A_x^i} \frac{  | \widetilde u(x-z)- \widetilde u(x)|}{|z|^{N+2s}} dz \right)^p dx\] since the only fact we will use will be that $ |z| >\delta(x)$.  So to estimate the full quantity we group the three terms into the following pairings 
\[
[ \widetilde u(x+z)- \widetilde u(x) ]  + [\widetilde u(x-z) - \widetilde u(x) ]
\] 
and then estimate 
 \[ 
\int_{x \in \Omega}  \left(  \int_{z \in A_x^i}  \frac{ | \widetilde u(x+z) + \widetilde u(x-z) - 2 \widetilde u(x) | }{|z|^{N+2s}} dz \right)^p dx \qquad \text{ for }\quad i=2,\cdots,3.
\]   
We split the proof into two cases:~ $s=\frac{1}{2}$ ~and ~ $s\in (1/2,1)$.

\paragraph{The case $s=\frac{1}{2}$.}
Let $ 2 \le i \le 4$ and note that we have 
\begin{eqnarray*}
 &&\int_{x \in \Omega} \left( \int_{z \in A_x^i} \frac{  | \widetilde u(x+z)- \widetilde u(x)|}{|z|^{N+1}} dz \right)^p dx\vspace{15 pt}\\
& \le &  \int_{x \in \Omega} \left( \int_{z \in A_x^i} \frac{  \int_0^1 | \nabla \widetilde u(x+ t z) dt }{|z|^N}  dz \right)^p dx \vspace{15 pt}\\ 
 &=&  \int_{x \in \Omega} \left( \int_{z \in A_x^i} \frac{  \int_0^1 | \nabla \widetilde u(x+ t z) dt }{|z|^{N-\alpha} |z|^\alpha}  dz \right)^p dx \quad \mbox{ for some $ \alpha>0 $ small, picked later} \vspace{15 pt}\\
 & \le &  \int_{x \in \Omega} \frac{1}{(\delta(x))^{\alpha p}} \left( \int_{z \in A_x^i} \frac{  \int_0^1 | \nabla \widetilde u(x+ t z) dt }{|z|^{N-\alpha} }  dz \right)^p dx \vspace{10 pt}\\
 & \le & C \int_{x \in \Omega} \frac{1}{(\delta(x))^{\alpha p}} \left( \int_{z \in A_x^i} \frac{  \int_0^1 | \nabla \widetilde u(x+ t z)|^p dt }{|z|^{N-\alpha} }  dz \right) dx \quad \mbox{(applying  Jensen's inequality twice)} \vspace{15 pt}\\
  & \le & C \int_{x \in \Omega} \frac{1}{(\delta(x))^{\alpha p}} \left( \int_{\delta(x) < |z| \le 1} \frac{  \int_0^1 | \nabla \widetilde u(x+ t z)|^p dt }{|z|^{N-\alpha} }  dz \right) dx \vspace{15 pt} \\
  &=& C \int_0^1 \int_{|z| \le 1} \frac{1}{|z|^{N-\alpha}} \left(  \int_{ \{x \in \Omega: \delta(x) \le |z| \} }  \frac{ | \nabla \widetilde u(x+t z)|^p}{(\delta(x))^{\alpha p}} dx \right) dz dt.
\end{eqnarray*} 
\medskip

\noindent We now fix $  0 <|z| \le 1$ and $ 0<t<1$ and note for $ 1<q<\infty$ we have 
\begin{eqnarray*}
   \int_{ \{x \in \Omega: \delta(x) \le |z| \} }  \frac{ | \nabla \widetilde u(x+t z)|^p}{(\delta(x))^{\alpha p}} dx & \le & \left( \int_\Omega | \nabla \widetilde u(x+tz)|^{pq} dx \right)^\frac{1}{q} \left( \int_\Omega \frac{1}{(\delta(x))^{\alpha p q'}} dx \right)^\frac{1}{q'} 
\end{eqnarray*} and so for fixed $ q$ we can take $ \alpha>0$ small enough so that $ \alpha p q' <1$ and the internal involving the distance function is bounded and the other integral is bounded (independent of $z$ and $t$) after considering the earlier gradient bound on $ \widetilde u$.   This shows that 
\[ \int_{x \in \Omega} \left( \int_{z \in A_x^i} \frac{  | \widetilde u(x+z)- \widetilde u(x)|}{|z|^{N+1}} dz \right)^p dx
\]     is bounded.

\paragraph{The case $s\in (1/2,1)$.} Let $ 2 \le i \le 4$ and $u$ be as above. Recalling for $ z \in A_x^i$ we have $ |z|>\delta(x)$ we have 
\[\begin{array}{l}
    \int_{x \in \Omega} \left( \int_{z \in A_x^i}  \frac{  | \widetilde u(x+z)- \widetilde u(x)|}{|z|^{N+2s}} dz \right)^p dz \vspace{10 pt}\\
    \le  \int_{\Omega} \left( \int_{A_x^i} \frac{ \int_0^1 | \nabla \widetilde{u}(x+ t z)| d t}{|z|^{N+2s-1}} dz 
    \right)^p dx \vspace{10 pt} \\
    =  \int_{\Omega} \left( \int_{A_x^i} \frac{ \int_0^1 | \nabla \widetilde{u}(x+ t z)| d t}{|z|^{N+2s-1-\alpha}|z|^\alpha} dz 
    \right)^p dx \quad \mbox{ $ \alpha>0$}\vspace{10 pt}  \\
       \le   \int_{\Omega} \frac{1}{(\delta(x))^{\alpha p}} \left( \int_{A_x^i} \frac{ \int_0^1 | \nabla \widetilde{u}(x+ t z)| d t}{|z|^{N+2s-1-\alpha}} dz 
    \right)^p dx  \vspace{10 pt}\\
     \le   C \int_{\Omega} \frac{1}{(\delta(x))^{\alpha p}} \left( \int_{A_x^i} \frac{ \int_0^1 | \nabla \widetilde{u}(x+ t z)|^p d t}{|z|^{N+2s-1-\alpha}} dz 
    \right) dx \quad \mbox{(Jensen's inequality applied  twice)} 
\end{array}\] If we now assume that $ | \nabla \widetilde{u}| \le C$ then we get 
this is bounded above by 
\[ C \int_{x \in \Omega} \frac{1}{\delta(x)^{\alpha p}} \left( \int_{z \in A_x^i} \frac{1}{|z|^{N+2s-1-\alpha}} dz \right) dx,\] and since $ A_x^i \subset  \{z: |z| \le 1, |z|>\delta(x) \}$  then to have this bounded its sufficient that $ \alpha p<1 $ and $  2s-1-\alpha<0$.   Hence we see its sufficient that  $ 2s-1<\frac{1}{p}$.   The compactness proof follows the same ideas as the previous range of $s$. 

\end{proof}


The following result is a maximum principle that we will use in the proof of existence of a solution.

\begin{thm}\label{Strong-Maximum}
Let $\Omega\subset \R^N$ be an open bounded set,  $q$ and $a$ be smooth  with $a(x)\ge 0$ in $\Omega$. Suppose $u \in W^{2,p}(\Omega)$ is a solution of 
\begin{equation}\label{Eq-maximum}
	\begin{split}
	\quad\left\{\begin{aligned}
		 Lu& = 0 && \text{ in \quad $\Omega$},\\
		\frac{\partial u}{\partial \nu} &  = 0 && \text{ on }\quad \partial\Omega,\\
		\cN_s u & = 0     && \text{ on }\quad  \mathbb{R}^N\setminus \overline{\Omega}.
	\end{aligned}\right.
	\end{split}
	\end{equation}
Then, $ \widetilde u \equiv 0$ in $ \R^N$. 
 \end{thm}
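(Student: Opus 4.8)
The plan is to run a maximum-principle argument on the continuous extension $\widetilde u$, reducing everything to the behaviour of its extremes inside $\Omega$, at $\partial\Omega$, and outside $\overline{\Omega}$. First I would record that $\widetilde u$ is continuous on $\R^N$ (by \cite[Proposition 5.2]{DRV17} together with the smoothness of $\partial\Omega$, since $u\in W^{2,p}(\Omega)\hookrightarrow \cC^{0,1}(\overline{\Omega})$) and that $\min_{\overline{\Omega}}u\le\widetilde u\le\max_{\overline{\Omega}}u$ on all of $\R^N$: indeed, for $x\in\R^N\setminus\overline{\Omega}$ the value $u_1(x)$ in \eqref{Ext-function} is a weighted average of $u|_\Omega$ against the strictly positive kernel $|x-\cdot|^{-N-2s}$. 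Hence $M:=\sup_{\R^N}\widetilde u=\max_{\overline{\Omega}}u$ is finite and attained. Since $-u$ solves \eqref{Eq-maximum} with the same $q$ and $a$ and $\widetilde{-u}=-\widetilde u$, it is enough to prove $M\le0$; applying this to $-u$ then yields $\inf_{\R^N}\widetilde u\ge0$, so $\widetilde u\equiv0$. Thus I argue by contradiction, assuming $M>0$.

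\medskip
\noindent\emph{Step 1: if the maximum is attained outside $\Omega$ or at an interior point, then $\widetilde u\equiv M$, which is impossible.} If $M=\widetilde u(x_0)$ for some $x_0\in\R^N\setminus\overline{\Omega}$, then by \eqref{Ext-function} the number $M$ is an average of $u|_\Omega\le M$ against a positive kernel, so $u\equiv M$ a.e.\ in $\Omega$, hence $\widetilde u\equiv M$ on $\R^N$. If $M=\widetilde u(x_0)$ for some $x_0\in\Omega$, I would first upgrade the local regularity of $u$: writing $-\Delta u=q\cdot\nabla u-au-(-\Delta)^s\widetilde u$ and using $u\in\cC^{1,\alpha}(\overline{\Omega})$ with $\alpha=1-N/p$, the regional part of $(-\Delta)^s\widetilde u$ near $x_0$ is Hölder continuous (in the ranges under consideration) and the far part is smooth, so the right-hand side lies in $\cC^{0,\beta}$ near $x_0$ and interior Schauder estimates give $u\in\cC^{2}$ near $x_0$. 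At $x_0$ one then has $\nabla u(x_0)=0$, $-\Delta u(x_0)\ge0$, $(-\Delta)^s\widetilde u(x_0)=C_{N,s}\int_{\R^N}\frac{M-\widetilde u(y)}{|x_0-y|^{N+2s}}\,dy\ge0$, and $a(x_0)M\ge0$; since their sum equals $Lu(x_0)=0$, each term vanishes, so $(-\Delta)^s\widetilde u(x_0)=0$, which again forces $\widetilde u\equiv M$ on $\R^N$. In either case $\widetilde u$ is the constant $M$; substituting this constant into the equation annihilates the diffusion and drift terms and leaves $aM\equiv0$ in $\Omega$, contradicting $a\ge0$, $a\not\equiv0$, $M>0$.

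\medskip
\noindent\emph{Step 2: the maximum is attained only on $\partial\Omega$.} By Step 1 we may assume $\widetilde u<M$ on $\R^N\setminus\partial\Omega$; pick $x_0\in\partial\Omega$ with $u(x_0)=M$. Since $x_0$ is also a maximum point of $u$ over $\overline{\Omega}$, its tangential gradient there vanishes, and the Neumann condition kills the normal derivative, so $\nabla u(x_0)=0$. Reading the PDE in $\Omega$ as the local equation $-\Delta u-q\cdot\nabla u+au=-(-\Delta)^s\widetilde u$ with right-hand side in $L^p(\Omega)$, $p>N$, I would invoke a Hopf boundary-point lemma for the mixed operator $L$ at $x_0$: $\Omega$ satisfies an interior ball condition at $x_0$ (smooth $\partial\Omega$), $\widetilde u<M=u(x_0)$ in $\Omega$, and the nonlocal term enters with a favourable sign near the boundary maximum — since $\widetilde u\le M$ on $\R^N$ with equality at $x_0$, the far-field part of $(-\Delta)^s\widetilde u$ is nonnegative, while the principal-value part is kept under control using the $\cC^{1,\alpha}(\overline{\Omega})$ bound on $u$ and the gradient estimates for $\widetilde u$ of Lemma \ref{pointbound}. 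The Hopf lemma then gives $\partial_\nu u(x_0)>0$, contradicting $\partial_\nu u(x_0)=0$. Combining the two steps, $M>0$ is impossible, so $\sup_{\R^N}\widetilde u\le0$; applying the same to $-u$ gives $\widetilde u\equiv0$ on $\R^N$.

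\medskip
The step I expect to be the main obstacle is Step 2: proving a Hopf-type boundary estimate for the drift-perturbed mixed operator $L$ when the solution is only $W^{2,p}(\Omega)$ (hence merely $\cC^{1,\alpha}$) and the ``source'' produced by the nonlocal term belongs only to $L^p(\Omega)$ and degenerates as $\partial\Omega$ is approached; this is exactly where the gradient bounds of Lemma \ref{pointbound} and the admissible ranges of $s$ and $p$ have to be exploited with care. By comparison, the interior regularity upgrade used in Step 1 is routine.
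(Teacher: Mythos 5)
Your Step 2 is where the proposal breaks down, and it is also where you diverge from what the paper actually does. You reduce the boundary case to ``a Hopf boundary-point lemma for the mixed operator $L$'' and then point out yourself that proving such a lemma --- for a $W^{2,p}$ solution, with the forcing term $(-\Delta)^s\widetilde u$ only in $L^p(\Omega)$, of no definite sign, and degenerating as $\delta(x)\to 0$ (recall $|\nabla u_1|\sim \delta^{-(1-2s)}$ for $s<1/2$ in Lemma \ref{pointbound}) --- is the main obstacle. As written, that step is simply not established: the classical Hopf argument needs either a sign or an integrability/smallness control of the right-hand side near the contact point, and your sketch (``the far-field part of $(-\Delta)^s\widetilde u$ is nonnegative'') is only valid at $x_0$ itself, not at the nearby points where the barrier comparison has to be run. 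The key idea you are missing is that no Hopf lemma is needed at all: the \emph{nonlocal} Neumann condition disposes of boundary extrema directly. If the extremum of $u$ over $\overline\Omega$ is attained at $x_0\in\partial\Omega$, take exterior points $x_m\to x_0$; the condition $\cN_s\widetilde u(x_m)=0$ says $\int_\Omega\bigl(u(y)-u(x_m)\bigr)|y-x_m|^{-N-2s}\,dy=0$, and passing to the limit (the integrand has a fixed sign because $u(x_0)$ is the extremum over $\overline\Omega$) gives $\int_\Omega\bigl(u(y)-u(x_0)\bigr)|y-x_0|^{-N-2s}\,dy=0$, forcing $u\equiv\mathrm{const}$ and then $a\cdot\mathrm{const}\equiv 0$, i.e.\ $u\equiv 0$. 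This is exactly the paper's treatment of the boundary case; your proposal replaces a two-line argument by an unproved and genuinely delicate lemma.

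There is also a secondary gap in your interior case (Step 1). The Schauder upgrade hinges on the regional part of $(-\Delta)^s\widetilde u$ being H\"older near $x_0$, which from $u\in\cC^{1,\alpha}$, $\alpha=1-N/p$, requires $2s-1<\alpha$; in the admissible range of the theorem this can fail (take $N<p<N+1$ and $s$ close to $\tfrac12+\tfrac1{2p}$, so that $1-N/p\le 2s-1<1/p$), so ``in the ranges under consideration'' is not justified as stated and would at best need a more careful bootstrap in the $W^{k,p}$ scale. The paper sidesteps pointwise evaluation altogether by mollifying: it shows the mollified function $u^{\e/2}$ attains an interior global minimum, evaluates the mollified equation there, and gets the contradiction from $-\Delta u^{\e/2}\le 0$, $\nabla u^{\e/2}=0$ and the strict sign of $(-\Delta)^s u^{\e/2}$ at a nonconstant global minimum. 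So while your overall skeleton (extremum inside / outside / on the boundary, with the averaging property of $u_1$ handling the exterior case and the $a\not\equiv 0$ term killing constants) matches the paper, both of your substantive steps --- the $C^2$ upgrade at an interior extremum and, above all, the boundary Hopf lemma --- are gaps that the paper's proof deliberately avoids.
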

\begin{proof}  Let $u$ denote the solution and, for notation, we also let $u$ denote $ \widetilde u$ outside of $ \Omega$.  Suppose $x_0 \in \overline{\Omega}$ 
such that $ u(x_0)= \inf_\Omega u$.  We first rule out $x_0 \in \pOm$.  If $ x_0 \in \pOm,$ 
then we take $ x_m \notin \overline{\Omega}$ such that $ x_m \rightarrow x_0$ as $m\rightarrow+\infty$. Using the nonlocal boundary condition we get  \[ 0 = \int_\Omega  \frac{u(y)- u(x_m)}{|y-x_m|^{N+2s}} dy.\] Passing to the limit, we see that 
\[ 0 = \int_\Omega  \frac{u(y)- u(x_0)}{|y-x_0|^{N+2s}} dy,\] which shows that $u=C=const.$ is constant in $\Omega$ and hence constant in $ \R^N$.  Then note from the equation we have $ a(x)C = f(x)$ in $ \Omega$  and this shows that $u=C \ge 0$ provided $a(x)$ not identically zero which we have assumed and hence we are done. 
We now suppose $x_0 \in \Omega$ is such that $u(x_0)= \inf_\Omega u$ and we suppose $u(x_0)<0$.  
 
We can also suppose that $u(x_0) < \inf_{\pOm} u$. Note that, from the definition of $u_1,$ we get  $u(x)> u(x_0)$ for all $x\in \R^N\setminus \overline \Omega.$

Fix $ \sigma>0$ such that $u(x_0)+ 10 \sigma < \min_{\pOm} u$ and we also assume $u(x_0)+10 \sigma<0$.   For $ \e>0$ small set $ \Omega_\e=\{x \in \Omega: \delta(x)>\e \}$ and $ \Gamma_\e=\{x \in \Omega: \delta(x)<\e \}$.  By continuity there is some $ \e_0>0$ such that 
\[ u(x_0)+ 8 \sigma < \inf_{\Gamma_{\e_0}} u.\] For $ x \notin \overline{\Omega}$ we have 

\begin{eqnarray*}
    u(x) \int_\Omega \frac{1}{|x-y|^{N+2s}} dy &=& \int_{\Gamma_{\e_0}} \frac{u(y)}{|x-y|^{N+2s}}dy
    + \int_{ \Omega \backslash\Gamma_{\e_0}} \frac{u(y)}{|x-y|^{N+2s}}dy \\
    & \ge & (u(x_0)+ 8 \sigma) \int_{\Gamma_{\e_0}} \frac{1}{|x-y|^{N+2s}}dy + u(x_0) \int_{ \Omega_\backslash\Gamma_{\e_0}} \frac{1}{|x-y|^{N+2s}}dy \\
    &=& u(x_0) \int_\Omega \frac{1}{|x-y|^{N+2s}} dy + 8 \sigma \int_{\Gamma_{\e_0}} \frac{1}{|x-y|^{N+2s}} dy
\end{eqnarray*} which gives 
\[ u(x) \ge u(x_0) +  \frac{8 \sigma \int_{\Gamma_{\e_0}} \frac{1}{|x-y|^{N+2s}} dy}{ \int_\Omega \frac{1}{|x-y|^{N+2s}} dy}.\] 
From this we can show there is some $c_{\e_0}>0$ (without loss of generality we can take $ c_{\e_0}<1$) such that $u(x) \ge u(x_0)+ 8 \sigma c_{\e_0}$ for all $ x \notin \overline{\Omega}$.

Let $ \eta$ denote a standard radial mollifier with $ \eta_\e$ the appropriately scaled function whose support is  $ \overline{B_\e}$ and set $u^\e(x) = (\eta_\e \ast u)(x)$ for $ x \in \R^N$.  For all $ 0<\e< \e_0$ we have 
\[ \inf_{ \pOm_\frac{\e}{2}} u^\frac{\e}{2} \ge \inf_{\Gamma_\e} u,\] and also we have 
$ \inf_{\Omega_\tau} u^\tau \rightarrow u(x_0)$  as $ \tau \rightarrow 0$.  From this we see there is some $ 0<\e_1<\e_0$ such that for all $ 0<\e<\e_1$ we have 
\[ \inf_{\Omega_\frac{\e}{2}} u^\frac{\e}{2} + 6 \sigma \le u(x_0) + 8 \sigma < \inf_{\Gamma_{\e_0}} u,\] but by the monotonicity of $ \e \mapsto \inf_{\Gamma_\e} u$ we have 
\[ \inf_{\Omega_\frac{\e}{2}} u^\frac{\e}{2} + 6 \sigma < \inf_{\Gamma_{\e_0}} u \le \inf_{\Gamma_\e} u\] for all $ 0<\e<\e_1$ and hence we have 
\[ \inf_{\Omega_\frac{\e}{2}} u^\frac{\e}{2} + 6 \sigma < \inf_{\pOm_{\frac{\e}{2}}} u^\frac{\e}{2},\] for $ 0<\e<\e_1$ and hence the minimum is contained in the interior of $ \Omega_\frac{\e}{2}$. 
We now want to show that $ \min_{ \Omega_\frac{\e}{2}} u^\frac{\e}{2}  \le u^\frac{\e}{2}(x) $ for all  $ x \in \R^N$.  Let $ 0<\e< \frac{\e_1}{10}$ with: \\
\begin{equation} \label{cond_1}
u^\frac{\e}{2}(x_0) < u(x_0)+8 \sigma c_{\e_0},
\end{equation} 
We consider the three cases: \\

\noindent
(i) $ x \in \Omega$ with $\delta(x)< \frac{\e}{2}$,    
(ii) $ x \notin \Omega$ with $ \delta(x)<\frac{\e}{2}$ 
 and (iii) $ x \notin \Omega$ with $\delta(x)>\frac{\e}{2}$. 

\paragraph{Case (i).} Here we have $ x \in \Omega$ with $\delta(x)< \frac{\e}{2}.$ So in this case we have 
\[ u^\frac{\e}{2}(x) = \int_{|y-x|<\frac{\e}{2}} \eta_\e(y-x) u(y) dy,\] and note the integral can be decomposed as 
\[ \int_{ |y-x|<\frac{\e}{2}, y \in \Omega}\eta_\e(y-x) u(y) dy +   \int_{ |y-x|<\frac{\e}{2}, y \notin \Omega}\eta_\e(y-x) u(y) dy \] and from this we see that 
\begin{eqnarray*}
    u^\frac{\e}{2} (x) & \ge & (u(x_0)+8 \sigma c_{\e_0}) \int_{|y-x|<\frac{\e}{2}} \eta_{\e}( y-x) dy  \\  
    & = &u(x_0)+8 \sigma c_{\e_0}  \\ 
    & > & u^\frac{\e}{2}(x_0) \\
    & \ge & \inf_{\Omega_\frac{\e}{2}} u^\frac{\e}{2}.
    \end{eqnarray*}

\paragraph{Case (ii).} In this case  we have from the definition of $\widetilde u$ that
\[ 
\begin{split}
u^\frac{\e}{2}(x)\int_{\Omega}\frac{1}{|x-y|^{N+2s}}\ dy &= \int_{\Omega}  \frac{u^\frac{\e}{2}(y)}{|x-y|^{N+2s}} dy\\
&=\int_{\Omega}  \frac{1}{|x-y|^{N+2s}}   \int_{|y-z|<\frac{\e}{2}} \eta_\e(z-y) u(z) dz dy\\
&\ge \inf_{\Omega_\frac{\e}{2}} u^\frac{\e}{2}\int_{\Omega}  \frac{1}{|x-y|^{N+2s}}\ dy,
\end{split}
\]
where we have used the computations  of Case $(i)$.

\paragraph{Case (iii).}  This follows similarly.\\

From the above we have, for small enough $\e$, that $ u^\frac{\e}{2}(x_\frac{\e}{2})= \min_{\Omega_\frac{\e}{2}} u^\frac{\e}{2}$ (some $ x_\frac{\e}{2} \in \Omega_\frac{\e}{2}$) and $ u^\frac{\e}{2}(x_\frac{\e}{2}) +6 \sigma < \inf_{\pOm_\frac{\e}{2}} u^\frac{\e}{2}$.  Also note we have (take $ \tau=\e/2$)
 $ L(u^\tau)(x)= f^\tau(x) \ge 0$ in $ \Omega_\tau$ and at $ x_\tau$ we have 
\begin{equation} \label{max_eq}
-\Delta u^\tau(x_\tau) + (-\Delta)^s u^\tau(x_\tau) + q(x_\tau) \cdot \nabla u^\tau(x_\tau) = f^\tau(x_\tau) - a(x_\tau) u^\tau(x_\tau) \ge 0.
\end{equation} But $ -\Delta u^\tau(x_\tau) \le 0$, $ \nabla u^\tau(x_\tau)=0$  and note that 
\[ (-\Delta)^s u^\frac{\e}{2}( x_\frac{\e}{2}) = \int_{y \in \R^N} \frac{  u^\frac{\e}{2}( x_\frac{\e}{2}) - u^\frac{\e}{2}(y) }{  | x_\frac{\e}{2}-y|^{N+2s}}  dy,\]  and note $ y \mapsto  u^\frac{\e}{2}( x_\frac{\e}{2}) - u^\frac{\e}{2}(y)$ is continuous in $y$ on $\R^N$, nonpositive and not identically zero.  From this we see that $(-\Delta)^s u^\frac{\e}{2}( x_\frac{\e}{2})<0$ and this contradicts (\ref{max_eq}).

\end{proof}

 \section{Proof of existence of a solution}

For $ u \in W^{2,p}(\Omega)$ and $ \gamma \in \R,$ we define 
$$L_\gamma u(x)= -\Delta u+\gamma (-\Delta)^s \tilde u(x)+a(x)u(x)+q\cdot \nabla u(x), \quad x\in \Omega$$ and we consider the family of indexed problems
\begin{equation}\label{LgammaEq}
	\begin{split}
	\quad\left\{\begin{aligned}
		L_{\gamma}  u    &= f && \text{ in \quad $\Omega$}\\
	\frac{\partial u}{\partial \nu} &=0     && \text{ on } \quad \partial\Omega.
	\end{aligned}\right.
	\end{split}
	\end{equation}
Let $\cA$ be the set 
\begin{equation}\label{setA}
\cA:=\left\{
\begin{aligned}
\gamma\in [0,1]: ~\exists C_{\gamma}>0 \text{ such that for all  }f \in L^p(\Omega),  \eqref{LgammaEq} \text{ has a  solution }\\
  u\in W^{2,p}(\Omega) \text{ such that }\|u\|_{W^{2,p}(\Omega)} \le C_{\gamma}\|f\|_{L^p(\Omega)}\qquad\qquad
 \end{aligned}
 \right\}.
\end{equation}
In \eqref{setA}, we take the constant $C_{\gamma}$ to be the smallest constant such that $\|u\|_{W^{2,p}(\Omega)}\le C_{\gamma}\|f\|_{L^p(\Omega)}$ holds for all functions $f\in L^p(\Omega)$. In other words, if $C_{\gamma}>\varepsilon>0$ then there exists  $f_{\varepsilon}\in \cC^{0,\alpha}(\overline \Omega)$ such that 
\begin{equation}\label{Smallest}
\|u\|_{W^{2,p}(\Omega)}\ge (C_{\gamma}-\varepsilon)\|f_{\varepsilon}\|_{L^p(\Omega)}.
\end{equation}

By classical theory, $ 0 \in \cA$.  Our goal is to show that $\cA$ is both open and closed and since $[0,1]$ is connected we then see that $ \cA \in \{ \emptyset, [0,1] \}$, and since its non empty, we must have $ \cA=[0,1]$ and in particular $1 \in \cA$ which corresponds to the result we are trying to prove.

\paragraph{$\cA$ is closed.} We prove $\cA$ is closed. Let $ \gamma_m \in \cA$ with $ \gamma_m \rightarrow \gamma$ and let $C_m=C_{\gamma_m}$ denote the associated constant.  We first consider the case of $ C_m$ bounded.  Let $ f \in L^p(\Omega)$ with $ \|f\|_{L^p}=1$.   Since $\gamma_m \in \cA$ there is some $ u_m \in W^{2,p}(\Omega)$ which satisfies  \eqref{LgammaEq},  with $\gamma_m$ in place of $\gamma$, and $ \|u_m \|_{W^{2,p}} \le C_m \|f\|_{L^p}=C_m$.   Note we can rewrite the problem as 
\begin{equation} \label{bring-to-r}
-\Delta u_m + au_m + q \cdot \nabla u_m = f + \gamma_m \Delta^s \widetilde u_m \quad \mbox{ in }  \Omega,
\end{equation} with $ \partial_\nu u_m=0$ on $ \pOm$.   Since $ C_m$ is bounded then we have $u_m$ bounded in $ W^{2,p}(\Omega)$ and by passing to a subsequence we can assume that $ u_m \rightharpoonup u$ in $ W^{2,p}(\Omega)$ and $ \|u \|_{W^{2,p}} \le \liminf_m \| u_m \|_{W^{2,p}} \le C_1$.  Also note by our earlier compactness result we have $ \Delta^s \widetilde u_m \rightarrow \Delta^s \widetilde u$ in $L^p(\Omega)$ and this (along with the above weak $W^{2,p}$ convergence) is sufficient convergence to pass to the limit in
(\ref{bring-to-r}). This shows that $ \gamma \in \cA$. 

We now consider the case of $C_m \rightarrow \infty$.  Then  there is some $ f_m \in L^p(\Omega)$ and $u_m \in W^{2,p}(\Omega)$ which solves $L_{\gamma_m} u_m = f_m$ in $ \Omega$ with $ \partial_\nu u_m=0$ on $ \pOm$ and 
\[ \|u_m \|_{W^{2,p}} \ge (C_m-1) \|f_m \|_{L^p}.\]  By normalizing we can assume $ \|u_m \|_{W^{2,p}(\Omega)}=1$ and hence $ \| f_m\|_{L^p} \rightarrow 0$.  By passing to subsequences we can assume that $u_m \rightharpoonup u$ in $W^{2,p}(\Omega)$ and strongly in $W^{1,p}(\Omega)$.   As before we rewrite the equation for $u_m$ by 
\begin{equation} \label{bring-to-rr}
-\Delta u_m + au_m + q \cdot \nabla u_m = f_m + \gamma_m \Delta^s \widetilde u_m \quad \mbox{ in }  \Omega,
\end{equation} with $ \partial_\nu u_m=0$ on $ \pOm$.  If $u=0$ then note the right hand side of (\ref{bring-to-rr}) converges to zero in $L^p(\Omega)$ and by standard elliptic theory we have $u_m \rightarrow 0$ in $ W^{2,p}(\Omega)$ which contradicts the normalization of $u_m$.  We now assume $ u \neq 0$.  By compactness we can pass to the limit in (\ref{bring-to-rr}) to see that $u \in W^{2,p}(\Omega) \backslash \{0\}$ satisfies $ L_{\gamma} u =0 $ in $ \Omega$ with $ \partial_\nu u=0$ on $ \pOm$ which contradicts 
Theorem \ref{Strong-Maximum}.

\paragraph{$\cA$ is open.}  Let $ \gamma_0 \in \cA$ and take $ |\e|$ small;  when $ \gamma_0 \in \{0,1 \}$ we need to restrict the sign of $ \e$.  Our goal is to show that $ \gamma=\gamma_0 + \e \in \cA$.  Fix $ f \in L^p(\Omega)$ with $ \|f\|_{L^p}=1$ and since $ \gamma_0 \in \cA$ there is some $ v_0 \in W^{2,p}(\Omega)$ which solves $ L_{\gamma_0} v_0 = f$ in $ \Omega$ with $ \partial_\nu v_0=0$ on $ \pOm$. We look for a solution of (\ref{LgammaEq}) of the form $ u= v_0+ \phi$. Writing out the details one sees we need $ \phi \in W^{2,p}(\Omega)$ to satisfy 
\begin{equation} \label{fix_st}
L_{\gamma_0} \phi = \e \Delta^s \widetilde v_0 + \e \Delta^s \widetilde \phi \quad \mbox{ in } \Omega,
\end{equation} with $ \partial_\nu \phi=0$ on $ \pOm$. Define the operator $J_\e(\phi)=\psi$ where $ \psi$ satisfies 
\begin{equation} \label{fix map}
L_{\gamma_0} \psi = \e \Delta^s \widetilde v_0 + \e \Delta^s \widetilde \phi \quad \mbox{ in } \Omega,
\end{equation} with $ \partial_\nu \psi=0$ on $ \pOm$.  We claim that for small enough $ \e$ that $J_\e$ is a contraction mapping on $ W^{2,p}(\Omega)$ and hence by the Contraction Mapping Principle there is some $ \phi \in W^{2,p}(\Omega)$ with $ J_\e(\phi)= \phi$.  From (\ref{fix_st}) one would then get a $W^{2,p}(\Omega)$ bound on $\phi$ and hence we'd get the desired bound on $ u$.   We first note that $J_\e$ is into $ W^{2,p}(\Omega)$ after noting the right hand side of (\ref{fix_st}) belongs to $ L^p(\Omega)$.   Let $ \phi_i \in W^{2,p}(\Omega)$ and $ \psi_i = J_\e(\phi_i)$ and then note we have 
\[ L_{\gamma_0} (\psi_2 - \psi_1) = \e \Delta^s ( \widetilde \phi_2 - \widetilde \phi_1) \quad \mbox{ in } \Omega,\] with the desired boundary condition.  Then we have 
\[ \| \psi_2 - \psi_1 \|_{W^{2,p}} \le C_{\gamma_0} |\e| \| \Delta^s( \widetilde \phi_2 - \widetilde \phi_1 ) \|_{L^p} \le C_{\gamma_0} |\e| C \| \phi_2 - \phi_1 \|_{W^{2,p}},\] and hence we see of $ |\e|$ small that $J_\e$ is a contraction on $W^{2,p}(\Omega)$.

\bibliographystyle{amsplain}

\end{document}